\newtheorem{Thm}{Theorem}[section]
\newtheorem{Prop}[Thm]{Proposition}
\newtheorem{Def/Thm}[Thm]{Definition/Theorem}
\newtheorem{Cor}[Thm]{Corollary}
\newtheorem{Lemma}[Thm]{Lemma}
\theoremstyle{remark}
\newtheorem{Def}[Thm]{Definition}
\numberwithin{equation}{section}
\newcommand{\ot }{\otimes}
\newcommand{\ra }{\rightarrow}
\newcommand{\Ext}{{\mathrm{Ext}}}
\newcommand{\Hom }{{\mathrm{Hom}}}
\newcommand{\Ker}{{\mathrm{Ker}}}
\newcommand{\rank }{{\mathrm{rank}}}
\newcommand{\cA}{{\mathcal{A}}}
\newcommand{\cO}{{\mathcal{O}}}
\newcommand{\cH}{{\mathcal{H}}}
\newcommand{\NN}{{\mathbb N}}
\newcommand{\PP }{{\mathbb P}}
\newcommand{\QQ }{{\mathbb Q}}
\newcommand{\CC }{{\mathbb C}}
\newcommand{\ZZ }{{\mathbb Z}}
\newcommand{\RR }{{\mathbb R}}
\newcommand{\ke }{{\varepsilon }}
\newcommand{\ka }{{\alpha}}
\newcommand{\kg }{{\gamma}}
\newcommand{\kd }{{\delta}}
\newcommand{\kl }{{\lambda}}
\newcommand{\X}{\mathfrak{X}}
\newcommand{\Y}{\mathfrak{Y}}
\newcommand{\Qb}{\overline{Q}}
\newcommand{\ab}{\overline{a}}
\newcommand{\cR}{\mathcal{R}}
\newcommand{\bC}{{\bf C}}
\newcommand{\M}{\mathfrak{M}}
\newcommand{\SF}{\mathrm{SF}}
\begin{document}
\title{Wall-crossings for Twisted Quiver Bundles}

\author{Bumsig Kim}
\address{School of Mathematics, Korea Institute for Advanced Study, 85
Hoegiro, Dongdaemun-gu, Seoul, 130-722, Korea}
\email{bumsig@kias.re.kr}

\author{Hwayoung Lee}
\address{School of Mathematics, Korea Institute for Advanced Study, 85
Hoegiro, Dongdaemun-gu, Seoul, 130-722, Korea}
\email{hlee014@kias.re.kr}

\keywords{Stability conditions, Double quivers, Twisted quiver bundles, generalized Donaldson-Thomas invariants, Wall-crossings}
\subjclass[2000]{Primary 14N35; Secondary 14H60}
\date{October 27, 2012}

\maketitle

\begin{abstract} Given a double quiver,
we study homological algebra of twisted quiver sheaves with the moment map relation using
the short exact sequence of Crawley-Boevey, Holland, Gothen, and King.
Then in a certain one-parameter space of the stability conditions, we obtain a
wall-crossing formula for the generalized Donaldson-Thomas
invariants of the abelian category of framed twisted quiver sheaves
on a smooth projective curve. To do so,
we closely follow the approach of Chuang, Diaconescu, and Pan
in the ADHM quiver case, which makes use of the theory of Joyce and Song.
The invariants virtually count framed twisted quiver sheaves with the moment map relation
and directly generalize the ADHM invariants of Diaconescu.
\end{abstract}

\section{Introduction}

A Nakajima's quiver variety is a holomorphic symplectic quotient
attached to a double quiver $\overline{Q}$, i.e.,
a quiver whose arrows are paired $(a,\bar{a})$ such that $\bar{a}$ is a
reverse arrow of $a$. This holomorphic symplectic quotient is a GIT quotient of
a locus defined by a moment map relation.
In \cite{K}, the moduli of stable twisted  quasimaps to the symplectic quotient from a fixed smooth projective curve $X$
is obtained as an application of the quasimap construction of \cite{CK, CKM} and shown to come
with a natural symmetric obstruction theory.
This result generalizes Diaconescu's work \cite{Dia}.

The stability  of stable twisted quasimaps turns out to be an asymptotic one in the one dimensional stability
parameter space $\RR _{> 0}$ of the abelian category $\cA '$ of framed twisted quiver sheaves on $X$.
It is therefore natural to investigate the wall-crossing phenomena of the moduli stack
$\M ^{ss}_\tau (\gamma)$ of $\tau$-semistable objects with numerical class
$\gamma$ in $\cA'$ as $\tau$ varies in $\RR _{>0}$.

For that study of wall-crossings, there are two theories available: the theory of Joyce and Song \cite{JS};
and the theory of Kontsevich and Soibelman \cite{KS}. In this paper, we perform our research
according to the framework of Joyce and Song.

First, we study the homological algebra of the category of twisted quiver sheaves {\em with the moment map relation} ~\eqref{M}.
This homological study is a generalization of works  Crawley-Boevey and Holland \cite{CH}, Crawley-Boevey \cite{CB1, CB2}, Gothen and King \cite{GK}, 
and Diaconescu \cite{Dia}.
We deduce that a truncated part of the category $\cA '$ behaves like a 3-Calabi-Yau category. For example, a
suitably defined antisymmetric bilinear form is numerical (see Proposition ~\ref{num}).
This property is the first main result of this paper and originates from the moment map relation.

Next, using the above bilinear form on the numerical $K$-group of $\cA '$, we define a Lie algebra $L(\cA ')$ and,
using the straightforward generalization of the Chern-Simons functional in
\cite{Dia}, we construct a Lie algebra homomorphism to $L(\cA ')$ from a Ringel-Hall type algebra of stack functions with algebra stabilizers
supported on virtual
indecomposables, as in \cite[Theorems 3.16 and 7.13]{JS}.
This Lie algebra homomorphism yields the definition of the generalized Donaldson-Thomas invariant for $(\overline{Q}, X,\gamma, \tau)$ via
the log stack function for $\M ^{ss}_\tau (\gamma)$.

Finally, using the approach of \cite{Dia, CDP1, CDP2}, we establish a wall-crossing formula
of the invariants (see Theorem ~\ref{wall}), which is the second main result of this paper.
In section ~\ref{Ja}, we show that the invariants vanish when the framing is zero and, at the same time,
the curve $X$ is not rational.
The wall-crossing correction  could be therefore nontrivial only when $X$ is rational.

\section{Homological Algebra}

The aim of this section is to prove a suitably defined antisymmetric Euler-like bilinear form
of framed twisted quiver sheaves is numerically determined in certain cases (see Proposition ~\ref{num}).
For this, we begin with finding a partial injective resolution ~\eqref{Vseq}
of double quiver representations with the moment map relation ~\eqref{MR1}.

\subsection{Double quivers} We set up notations for quivers.
Let $Q$ be a finite quiver, i.e., a directed graph whose arrow set $Q_0$ and vertex set $Q_1$
are finite. The tail map and the head map from $Q_1$ to $Q_0$ are denoted by $t$ and $h$, respectively. 
For each arrow $a\in Q_1$, we denote by $\bar{a}$  the reverse arrow of $a$.
Define the double quiver $\overline{Q}$ of $Q$ by adjoining a reverse arrow to each arrow of $Q$.
A path $p$ is an ordered set $a_1...a_m$ of arrows $a_i$ such that $ta_{i}=ha_{i+1}$ for $i=1,...,m-1$.
Define the head and the tail of $p$ by $hp=ha_1$ and $tp=ta_m$, respectively.
For each vertex $i\in Q_0$, define a trivial path $e_i$. Also set $h(e_i)=t(e_i)=i$.
The lengths of $p$ and $e_i$ are by definition $m$ and $0$, respectively.
Let $R$ be a commutative ring with unity $1$. The path algebra $R\Qb$ is the $R$-algebra
generated by all paths subject to relations by the following rules (see for instance \cite{GK}):
$p\cdot q = pq$ if $tp=hq$, $0$ otherwise; $p\cdot e_{tp}=p$; and $e_{hp}p=p$.

\subsection{Quiver representations}
In this section, we let $Q_0'$ be any nonempty subset of $Q_0$. We fix $\lambda _i
\in R$ for each $i\in Q_0'$.

Following  \cite{CH}, we consider the
two-sided ideal $(\mu-\lambda)$ generated by the relation
\begin{equation}\sum _{ i\in Q_0'} \left(\label{MR1}\left(\sum_{a\in \overline{Q}_1: ha=i}(-1)^{|a|}a \ab
\right) -\lambda_ie_i\right)=0, \end{equation}
 where $|a|$ is $0$ if $a\in Q_1$ or $1$ otherwise.
In fact, \eqref{MR1} is the functorial expression of the moment map equation (see \cite{CH, CB2})
and hence we call \eqref{MR1} {\em the moment map relation for the quiver} (or more precisely for $(\overline{Q}, Q_0', \lambda)$).
We will study the homological algebra of  the quotient algebra $A:=
R\Qb/(\mu -\lambda)$, i.e., the deformed preprojective algebra which was introduced by  W. Crawley-Boevey and M. Holland in \cite[Section 4] {CH}.
Note that every $A$-module
can be considered as an $R$-module by the natural $R$-module
homomorphism $R\ra A$, $r\mapsto r\sum _{i\in Q_0} e_i$.

Let $\cR$ be the
abelian category of (left) $A$-modules.
For $V\in \cR$, we construct  a sequence in $\cR$, which becomes a partial injective resolution of $V$ when $R$ is a field.
The sequence is defined to be
\begin{eqnarray}\label{Vseq}
0 &\ra&  V \stackrel{\epsilon}{\ra} \bigoplus _{i\in Q_0} \Hom _{R}(e_i A, V_i )  \\
&\stackrel{g}{\ra}& \bigoplus _{a\in \Qb _1}\Hom _{R} (e_{ta}A, V_{ha})
\stackrel{m}{\ra}  \bigoplus _{i\in Q_0'} \Hom _{R} ( e_i A, V_i )  ,\nonumber
\end{eqnarray}
where:
\begin{itemize}
\item we view $e_iA$ as an $R$-$A$ bimodule so that $\Hom _R (e_iA, V_j)$ is a {\em left} $A$-module by 
$a\alpha (p):=\alpha (pa)$ for $\alpha\in \Hom _R (e_iA, V_j), a\in A, p\in e_iA$;
\item $V_i := e_i V$ which is a $R$-submodule of $V$;
\item the $A$-homomorphisms $\ke$, $g$, $m$ are defined by: for $p_i\in e_iA\subset A$
\begin{enumerate}
\item $\epsilon (v)(p_i) = p_iv$;
\item $g(\ka  )_a ( p_{ta}) = \ka _{ha} (a p_{ta}) - a (\ka _{ta} (p_{ta}))$;
\item $m(\gamma ) ( p_i) = \sum _{a\in \Qb : ta = i} (-1)^{|a|} (\ab (\kg _a(p_i)) + \kg _{\ab} ( a p_i))$.
\end{enumerate}
\end{itemize}

\medskip

The above sequence ~\eqref{Vseq} is obtained from exact sequences in \cite[Lemma 4.2]{CH}, \cite  [Proof of Lemma 1]{CB1}, \cite  [Proof of Lemma 3.2]{CB2}, \cite[(2.1)]{GK}.
 Since $A$ has the `moment map relation' \eqref{MR1}, $g$ is not surjective (see \cite[(2.1)]{GK}). 
 Our moment map relation is slightly different from that in \cite{CH, CB1, CB2} 
 in that we only consider the relations in $Q'_0$, not the relations in $Q_0$.

\begin{Prop}\label{PropS}
The sequence ~\eqref{Vseq} is exact.
\end{Prop}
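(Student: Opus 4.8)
The plan is to split the exactness of \eqref{Vseq} into its three nodes and to isolate the single place where the moment map relation \eqref{MR1} is actually used. Write $B := R\overline{Q}$ for the path algebra and $I := (\mu-\lambda)$, so $A = B/I$; note that $I = (\rho_i : i\in Q_0')$ with $\rho_i = \sum_{a\in\overline{Q}_1:\,ha=i}(-1)^{|a|}a\ab - \lambda_i e_i \in e_iBe_i$, since $e_i(\mu-\lambda)e_i = \rho_i$. Any $A$-module $V$ is by restriction a $B$-module with the same vertex pieces $V_i = e_iV$, and for such $V$ the formulas defining $\epsilon$ and $g$ are literally the Gothen--King maps of \cite{GK} for $B$; moreover their values annihilate $I$ (for instance $\epsilon(v)(p_i) = p_iv = 0$ when $p_i\in e_iI$, as $I$ acts by zero on an $A$-module), so they factor through the quotients $e_\bullet A$ and land in the subspaces appearing in \eqref{Vseq}. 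Thus \eqref{Vseq} is the Gothen--King short exact sequence
\[
0\to V\to\bigoplus_{i\in Q_0}\Hom_R(e_iB,V_i)\xrightarrow{g^B}\bigoplus_{a\in\overline{Q}_1}\Hom_R(e_{ta}B,V_{ha})\to 0
\]
restricted to the $A$-subspaces, together with the extra node $m$.

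Exactness at $V$ and at the first $\Hom$-term then comes for free from Gothen--King. Indeed $\epsilon$ is injective because it is the injective Gothen--King map corestricted to a subspace; and since the $A$-level $g$ is the restriction of $g^B$, whose kernel is exactly $\operatorname{im}\epsilon\subseteq\bigoplus_i\Hom_R(e_iA,V_i)$, we get $\ker g = \ker g^B\cap\bigoplus_i\Hom_R(e_iA,V_i) = \operatorname{im}\epsilon$. No use of the relation is made here; it is precisely the failure of $g^B$ to stay surjective after restriction that the node $m$ must detect.

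The substance is exactness at the second $\Hom$-term. First one checks $m\circ g = 0$ by direct substitution: expanding $m(g\alpha)(p_i)$ and cancelling the cross terms collapses it to $\sum_{a:\,ta=i}(-1)^{|a|}\big(\alpha_i(\ab a\,p_i) - \ab a\,\alpha_i(p_i)\big)$, and since $\sum_{a:\,ta=i}(-1)^{|a|}\ab a = -\rho_i-\lambda_ie_i$ equals $-\lambda_ie_i$ in $A$, the two remaining terms cancel; this is exactly where \eqref{MR1} enters. For the reverse inclusion $\ker m\subseteq\operatorname{im}g$, take $\beta$ with $m(\beta)=0$, lift it to a $B$-cochain $\tilde\beta$ through $e_\bullet B\twoheadrightarrow e_\bullet A$, and use the surjectivity of $g^B$ to solve $g^B(\tilde\alpha)=\tilde\beta$. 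The obstruction to descending $\tilde\alpha$ to an $A$-cochain is $o := \tilde\alpha|_I$; the identity $g^B(\tilde\alpha)=\tilde\beta$ together with $\tilde\beta|_I = 0$ gives $o_{ha}(ap) = a\,o_{ta}(p)$ for $p\in e_{ta}I$, i.e. $o$ is arrow-equivariant and so $o\in\Hom_B(I,V)$, and $o$ is independent of the lift because $\ker g^B=\operatorname{im}\epsilon^B$ consists of cochains vanishing on $I$. Since $V$ is an $A$-module, $\Hom_B(I,V) = \Hom_A(I/I^2,V) = \bigoplus_{i\in Q_0'}\Hom_R(e_iA,V_i)$, and under this identification $o$ is nothing but $m(\beta)$ (up to sign, as one sees by evaluating $o_i$ on the generator $\rho_i$). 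Hence $m(\beta)=0$ forces $o=0$, so $\tilde\alpha$ factors through $A$ and produces $\alpha$ with $g(\alpha)=\beta$.

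The main obstacle is this last identification: one must show the obstruction cochain $o$ is controlled entirely by its values on the relation generators $\rho_i$, i.e. that $I/I^2\cong\bigoplus_{i\in Q_0'}Ae_i\otimes_R e_iA$ as $A$-bimodules, which is the deformed-preprojective structure of $A$ from \cite{CH}; this is what makes $\Hom_B(I,V)$ the target of $m$ and matches $o$ with $m(\beta)$. Conceptually the whole of \eqref{Vseq} is $\Hom_A(-,V)$ applied to the Crawley--Boevey--Holland bimodule resolution
\[
\bigoplus_{i\in Q_0'}Ae_i\otimes_R e_iA\to\bigoplus_{a\in\overline{Q}_1}Ae_{ha}\otimes_R e_{ta}A\to\bigoplus_{i\in Q_0}Ae_i\otimes_R e_iA\to A,
\]
and the three exactness statements reduce to exactness of that resolution at its first two spots together with $\Ext^1_A(A,V)=0$; the field hypothesis on $R$ is needed only to make the coinduced modules $\Hom_R(e_\bullet A,V_\bullet)$ injective, which is what upgrades \eqref{Vseq} to a genuine partial injective resolution.
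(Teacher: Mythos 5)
Your proof is correct and is in essence the same as the paper's: both handle the first two exactness spots by the Gothen--King argument for the relation-free path algebra, both verify $m\circ g=0$ by the same cancellation using the moment map relation, and for the hard inclusion $\Ker\, m\subseteq\mathrm{Im}\, g$ both first solve $\tilde g(\tilde\alpha)=\gamma$ over the free path algebra $F=R\overline{Q}$ and then use $m(\gamma)=0$ to show the solution annihilates the ideal $(\mu-\lambda)$ --- your ``obstruction cochain $o$'' vanishing on the generators $\rho_i$ is precisely the paper's computation \eqref{well-defined}. One assertion in your write-up should be weakened: the claimed identifications $\Hom_B(I,V)=\bigoplus_{i\in Q_0'}\Hom_R(e_iA,V_i)$ and $I/I^2\cong\bigoplus_{i\in Q_0'}Ae_i\otimes_R e_iA$ are stronger than what you actually use and are not justified here (injectivity of the natural surjection $\bigoplus_{i}Ae_i\otimes_R e_iA\to I/I^2$ is a genuinely nontrivial property of deformed preprojective algebras, not a formality). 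Fortunately your argument only needs the trivial direction: $I$ is $R$-spanned by elements $q\rho_ip$, and $o$ is left $B$-linear with $o(I^2)=0$, so the vanishing of each $p\mapsto o(\rho_ip)=\pm\, m(\beta)_i(p)$ already forces $o=0$; with the equality replaced by this one-way statement, your proof is complete.
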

\begin{proof}
Following the proofs  in \cite{CH, CB1, CB2, GK}, we record a proof for the sake of completion.
If  $\epsilon(v)(e_i)=e_iv=0$ for all $i\in Q_0$, then $\sum _i e_i v=0$.
Since $\sum e_i=1$, we see that $v=0.$ Thus $\epsilon$ is injective. It is clear that $\mathrm{Im}\,\epsilon \subset \Ker\, g$.
Next we will show that $\Ker \, g\subset \mathrm{Im} \,\epsilon$. Consider $\ka \in \bigoplus _{i\in Q_0} \Hom _{R}(e_i A, V_i )
\subset \Hom _R(A, V)$ such that $g(\ka )=0$.  This implies that $\ka$ is $A$-linear. Therefore $\Ker\, g \subset  \Hom_A(A,V)=\epsilon (V)$.
Next we can check that $ \mathrm{Im}\, g\subset \Ker \, m$ since
\begin{eqnarray*}
&(m\circ g(\ka_i))_i(p_i)= \sum_{a\in \Qb : ta = i}(-1)^{|\ab|}  (\ab (g(\ka_i)_{a}(p_{i}))+g(\ka_i)_{\ab}(a p_{i}))=\\
&\sum_{a\in \Qb : ta = i}(-1)^{|\ab|}  (\ab (\ka_{ha}(a p_{i})-a\ka_{i}(p_i)) +\ka_{i}(\ab a p_{i})-\ab (\ka_{t\ab}( a  p_i)))=0.
\end{eqnarray*}
The last equality above follows  from the $R$-linearity of $\ka$, the moment map relation, and $t\ab=ha.$
Finally, let us show the hard part $\Ker\, m\subset \mathrm{Im}\, g.$ 
 Let $(\gamma_a)_{a\in \Qb}\in \Ker\, m$, in other words, $ \text{for each } i \in Q_0',$
 $\sum _{a\in \Qb : ta = i}(-1)^{|a|}(\ab(\gamma_a(p_{ta}))+\gamma_{\bar{a}}(a p_{ta}))=0.$
 Let $A_k$ be an $R$-module generated by arrows $p$ whose lengths are less than or equal to $k$.
 For example, $A_0$ is generated by $e_i$, $i\in Q_0$ and $A_1$ is generated by $a\in  \overline{Q}_1$ and $e_i$, $i\in Q_0$.
Then consider the filtration  $A_0\subset A_1\subset A_2\subset \cdots .$
  For each vertex $i$, there is the corresponding filtration
$e_iA_0\subset e_iA_1\subset e_iA_2\subset \cdots$.
An $R$-linear map $\ka$ will be constructed by induction on the filtration.
 Define $\ka=(\ka_i)\in \bigoplus _{i\in Q_0} \Hom _{R}(e_i A, V_i )  , \;\ka_i:e_iA \rightarrow V_i$ as
 \begin{itemize}
 \item $\ka_i \mid_{e_iA_0}=0$ and
 \item $\ka_i(ap_{ta})=a\ka_{ta}(p_{ta})+\gamma_{a}(p_{ta})$ for $a\in \Qb _1$ with $ha=i$.
 \end{itemize}
 We need to show that $\ka$ is well-defined.  For that, we consider the path algebra $F:=R\Qb$ without any relation
 and its corresponding filtration $F_i$. Note that $\ka$ is well defined as an element on $\Hom (F, V)$ and
 $\tilde{g}(\ka ) = \kg$ if
 $\tilde{g} : \bigoplus _{i\in Q_0}\Hom _{R}(e_i F, V_i )  \ra \bigoplus _{a\in \Qb _1}\Hom _{R} (e_{ta}F, V_{ha})$ is
 the homomorphism corresponding to $g$.
Now we claim that $\ka$ is well-defined on $A_n$.
It is clear that the claim is true when $n=0, 1$.
Let $F_n$ be the $R$-submodule of $F$ spanned by length-$n$ elements.
Suppose that $\ka$ is well-defined on $A_{n-1}$.
Then note that for $p_i\in e_iF_{n-2}$ and $i\in Q_0'$,
\begin{eqnarray} & & \ka_i ((\sum_{a\in \Qb _1, ha=i}(-1)^{|a|}a \ab-\lambda_i )p_i)\label{well-defined} \\
&=& \sum (-1)^{|a|}(a (\ka_{ta}(\ab p_i))+\gamma_{a}(\ab p_i))-\lambda_i \ka_i(p_i) =0 \nonumber \end{eqnarray}
since $\sum (-1)^{|a|} \gamma_{a}(\ab p_i) $ becomes
\begin{eqnarray*}
 & & 
  - \sum (-1)^{|a|}a\gamma_{\ab}( p_i)  \;\; (\textrm{by  }m(\gamma_a)=0) \\
&=& - \sum (-1)^{|a|}a (\tilde{g}(\ka_i)_{\ab})( p_i) \;\; (\textrm{by } \gamma =\tilde{g}(\ka ))  \\
&=& - \sum (-1)^{|a|} (a(\ka_{h\ab}(\ab p_i))-a (\ab \ka_{t\ab}(p_i)) )\;\;(\textrm{by the definition of } \tilde{g}).
\end{eqnarray*}
Combined with the inductive definition of $\ka$, the equation ~\eqref{well-defined}  implies that $\ka =0 $
 on the two-sided ideal $(\mu -\kl )$ of $A_n$. By the definition of $\ka$, it is clear that $g(\ka )= \gamma$. \end{proof}

Replacing $V$ by $W$ in  the sequence ~\eqref{Vseq} and taking $\Hom _A(V, \cdot )$ on \eqref{Vseq},
we obtain a sequence
\begin{eqnarray}\label{VWseq}
0 &\ra& \Hom _A (V, W) \ra \bigoplus _{i\in Q_0} \Hom _R (V_i,W_i) \\
&\ra& \bigoplus_{a\in \Qb _1} \Hom _R(V_{ta}, W_{ha} )
\ra \bigoplus_{i\in Q_0'} \Hom_R (V_i, W_i ) \nonumber
\end{eqnarray} of $R$-modules. This simplification follows from  the adjunction
of \cite[(2,2)]{GK}.
Let ${\bf C}(V, W)$ be the complex consisting of the last three terms of the sequence ~\eqref{VWseq}
with the first term at degree $0$. Then we get the following.

\begin{Cor} Let $R$ be a field.

\begin{enumerate}

\item $\mathrm{Hom}_R (e_iA, V_j)$ is an injective $A$-module.

\item For $l=0, 1$,
\[ \Ext ^l_{A} (V, W) \cong H^l ({\bf C} (V, W)). \]

\end{enumerate}

\end{Cor}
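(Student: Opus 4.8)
My plan is to treat the two assertions in turn, using the injectivity statement (1) as the engine for the cohomological computation (2).

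For (1), I would prove injectivity directly from the adjunction already displayed in the excerpt, rather than by a diagram chase. Fix $i\in Q_0$ and an $R$-vector space $U$ (we apply it to $U=V_j=e_jV$). Viewing $e_iA$ as an $R$-$A$ bimodule, the adjunction yields a natural isomorphism
\[ \Hom _A(M,\Hom _R(e_iA,U))\cong \Hom _R(e_iA\otimes _A M,U)=\Hom _R(e_iM,U) \]
for every left $A$-module $M$. I would then observe that $M\mapsto e_iM\cong e_iA\otimes _A M$ is exact, since $e_iA$ is a direct summand of $A$ as a right $A$-module and hence flat, and that $\Hom _R(-,U)$ is exact because $R$ is a field, so $U$ is an injective $R$-module. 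Composing, $\Hom _A(-,\Hom _R(e_iA,U))$ is exact, which is exactly the assertion that $\Hom _R(e_iA,U)$ is an injective $A$-module.

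For (2), the key observation is that the three-term complex ${\bf C}(V,W)$ is precisely $\Hom _A(V,-)$ applied to the non-augmented part $I^\bullet=(I^0\xrightarrow{g}I^1\xrightarrow{m}I^2)$ of the sequence ~\eqref{Vseq} written for $W$; this is how ~\eqref{VWseq} was produced, the adjunction identifying $\Hom _A(V,\Hom _R(e_iA,W_j))$ with $\Hom _R(V_i,W_j)$. By Proposition ~\ref{PropS} the augmented sequence $0\ra W\ra I^0\ra I^1\ra I^2$ is exact at $W$, $I^0$ and $I^1$, and by part (1) each $I^l$ is an injective $A$-module. Hence $I^\bullet$ is a complex of injectives with cohomology $W$ in degree $0$ and $0$ in degree $1$; equivalently $0\ra W\ra I^0\ra I^1$ begins an injective resolution of $W$.

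It then remains to extract $\Ext^0$ and $\Ext^1$. For $l=0$, left exactness of $\Hom _A(V,-)$ applied to $0\ra W\ra I^0\ra I^1$ identifies $\Ker({\bf C}^0\ra {\bf C}^1)$ with $\Hom _A(V,W)=\Ext^0_A(V,W)$. For $l=1$, I would set $Z=\mathrm{Im}\,g=\Ker\,m$ and use the short exact sequence $0\ra W\ra I^0\ra Z\ra 0$: its long exact $\Ext_A(V,-)$-sequence, together with $\Ext^1_A(V,I^0)=0$ coming from the injectivity of $I^0$, gives $\Ext^1_A(V,W)\cong \mathrm{coker}(\Hom _A(V,I^0)\ra \Hom _A(V,Z))$, while left exactness applied to $0\ra Z\ra I^1\xrightarrow{m}I^2$ identifies $\Hom _A(V,Z)$ with $\Ker({\bf C}^1\ra {\bf C}^2)$. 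Combining these yields $\Ext^1_A(V,W)\cong \Ker({\bf C}^1\ra {\bf C}^2)/\mathrm{Im}({\bf C}^0\ra {\bf C}^1)=H^1({\bf C}(V,W))$. The main point to watch is that ~\eqref{Vseq} is only a partial resolution — it need not be exact at $I^2$, since $m$ fails to be surjective — so this argument computes $\Ext^l$ only for $l\le 1$ and says nothing about $\Ext^2$; the restriction $l=0,1$ in the statement is exactly this limitation.
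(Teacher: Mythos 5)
Your proposal is correct and follows essentially the same route as the paper: part (1) via the adjunction $\Hom_A(\bullet,\Hom_R(e_iA,U))\cong\Hom_R(e_iA\otimes_A\bullet,U)$ together with projectivity of $e_iA$ as a right $A$-module and injectivity of $U$ over the field $R$, and part (2) by reading off $\Ext^0$ and $\Ext^1$ from the partial injective resolution furnished by Proposition~\ref{PropS}. You merely spell out in detail (correctly, including the caveat about possible failure of exactness at the last term) what the paper compresses into the single sentence that (2) follows because \eqref{Vseq} is a partial injective resolution.
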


\begin{proof} By the above adjunction, we note the equivalence of functors:
$$\mathrm{Hom}_A(\bullet , \mathrm{Hom}_R (e_iA, V_j))\cong \mathrm{Hom}_R (e_iA\otimes _A\bullet , V_j ).$$
 The latter is   an exact functor  since $e_iA$ is a projective right $A$-module and $V_j$ is an injective $R$-module.
This proves (1).
Now (2) follows since \eqref{Vseq} is a partial injective resolution of $V$.
\end{proof}

\subsection{Quiver sheaves}
We carry out a similar procedure for twisted quiver sheaves in place of  quiver representations. To introduce
twisted quiver sheaves,
let $X$ be a Gorenstein projective variety,  let $\omega _X$ be
the dualizing sheaf for $X$,
and  let $\kl _i \in \Gamma (X, \omega _X)$.
 Suppose  also that we choose an invertible sheaf $M_a$ on $X$ for each $a\in \overline{Q}_1$ and
an isomorphism $f_{b,\bar{b}}: M_b \ot M_{\bar{b}} \ra \omega _X^\vee $ for
each $b\in Q_1$. We will set $f_{\bar{b} , b} = f _{b, \bar{b} }$ for $b\in Q_1$ using the natural isomorphism
$M_{\bar{b}} \ot M_b \cong M_b \ot M_{\bar{b}}$.
The condition  $M_b \ot M_{\bar{b}} \simeq \omega _X^\vee,\;  \forall b\in Q_1$ 
is a direct generalization of the corresponding condition in \cite{Dia}.

Provided with the above data, we define
an $\cO_X$-algebra structure  on the sheaf $$\bigoplus _{\text{all paths }p} M_p$$  by making:
\begin{itemize}
\item $M_p:=M_{a_1} \ot ...\ot M_{a_n}$ if $p=a_1...a_n$ with $a_i\in \Qb _1$, and  $M_{e_i}:=\cO_X$;
\item for $x_p \in M_p$, $x_q\in M_q$, let $x_px_q := x_p\ot x_q$ if $tp=hq$, and $0$ otherwise;
and in $\bigoplus _{p} M_p$ we have natural identifications $M_pM_{e_{hp}}=M_p\ot M_{e_{hp}}=M_p=M_{e_{tp}}M_p=M_{e_{tp}}\ot M_p$.
\end{itemize}
We denote by ${\bf M}\Qb$ this  $\cO_X$-algebra graded by lengths.

We want to define an ideal sheaf from the moment map relation.
First, for every local section $\xi \in \omega _X^\vee$, let
\begin{equation*}\label{M} (\mu -\kl )(\xi ):=
\sum _{i\in Q_0'} \left(\left( \sum _{a\in \Qb _1: ha = i} (-1)^{|a|} \xi _a \ot \xi _{\ab}\right)
 - \langle \xi , \lambda _i\rangle e_i \right),\end{equation*}
where $e_i$ stands for the
 constant $1$ in $M_{e_i}=\cO_X$ and
$\xi _a\ot \xi _{\ab}\in M_a\ot M_{\ab}$ is required to satisfy $f_{a,\ab}(\xi _a \ot \xi _{\ab})=\xi $. Then since
$(\mu-\kl)(f\cdot \xi ) = f\cdot (\mu -\kl )(\xi)$  for $f\in \cO_X$,
we can define the ideal sheaf  $(\mu -\kl)$ of  ${\bf M}\Qb$ generated by $(\mu -\kl)(\xi)$ for all $\xi \in \omega _X^\vee$
and hence the quotient sheaf $B:={\bf M}\Qb/(\mu - \kl)$.

\medskip

Now we turn into a homological algebra of the abelian category $\cA$ of $B$-modules. 
For an alternative and concrete description of a $B$-module, we view a $B$-module as an $\cO_X$-module as follows.
A collection of $\cO _X$-sheaves $E_i, i\in Q_0$ and
$\cO _X$-homomorphisms $\phi _a: M_a\ot E_{ta} \ra E_{ha}$, $a\in \overline{Q}_1$.
The collection
will be called
a {\em $\bf M$-twisted quiver sheaf} on $X$ in our context if the following moment map relation holds:
\begin{equation}\label{M}
\sum _{i\in Q_0'} \left(\left( \sum _{a\in \Qb _1: ha = i} (-1)^{|a|} \phi _a \circ (\mathrm{Id}_{M_{a}} \ot \phi _{\ab})\right)
 - \lambda _i\ot \mathrm{Id}_{E_i} \right)=0.\end{equation}

\begin{Prop}\label{McA}
The category of $\bf M$-twisted quiver sheaves is equivalent to $\cA$.
\end{Prop}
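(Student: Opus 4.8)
The plan is to exhibit mutually inverse functors between the category of $\bf M$-twisted quiver sheaves and the category $\cA$ of $B$-modules, thereby establishing an equivalence. First I would construct the functor from $\cA$ to twisted quiver sheaves. Given a $B$-module $E$, the algebra $B$ contains the idempotents $e_i\in M_{e_i}=\cO_X$ coming from the grading by length, so I set $E_i:=e_iE$, an $\cO_X$-submodule of $E$. Each arrow $a\in\Qb_1$ gives a length-one component $M_a\subset B$, and multiplication by $M_a$ inside the $B$-module $E$ produces, after using $e_{ha}\cdot a=a=a\cdot e_{ta}$, an $\cO_X$-homomorphism $\phi_a\colon M_a\ot E_{ta}\ra E_{ha}$. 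The associativity of the $B$-action together with the definition of ${\bf M}\Qb$ guarantees that this assignment is compatible with composition of paths, and crucially the moment map relation \eqref{M} for the collection $(E_i,\phi_a)$ is exactly the statement that the $B$-action kills the ideal $(\mu-\kl)$, i.e. factors through $B={\bf M}\Qb/(\mu-\kl)$. Morphisms of $B$-modules restrict to $\cO_X$-homomorphisms $E_i\ra E_i'$ commuting with the $\phi_a$, so the functor is well-defined on morphisms.

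Next I would construct the functor in the reverse direction. Given a collection $(E_i,\phi_a)$ satisfying \eqref{M}, I set $E:=\bigoplus_{i\in Q_0}E_i$ and define an action of ${\bf M}\Qb$ on $E$ by letting a path $p=a_1\cdots a_n$ act through the composite $M_{a_1}\ot\cdots\ot M_{a_n}\ot E_{tp}\ra E_{hp}$ built out of the $\phi_{a_j}$ in the evident order, and letting $e_i$ act as the projection onto $E_i$. This visibly defines an ${\bf M}\Qb$-module structure; the moment map relation \eqref{M} says precisely that the generators $(\mu-\kl)(\xi)$ of the ideal act as zero, so the action descends to $B$, producing a genuine $B$-module. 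One then checks directly that these two constructions are inverse to each other up to the natural isomorphism $E\cong\bigoplus_i e_iE$ furnished by $\sum_i e_i=1$, and that they are compatible with morphisms, so they assemble into quasi-inverse functors.

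The only genuine point requiring care — and the step I expect to be the main obstacle — is verifying that the ideal-sheaf description via $(\mu-\kl)(\xi)$ and the homomorphism description via \eqref{M} really match up, including the bookkeeping of the twisting isomorphisms $f_{a,\ab}$. In the module $E$, the term $\phi_a\circ(\mathrm{Id}_{M_a}\ot\phi_{\ab})$ is an $\cO_X$-homomorphism $M_a\ot M_{\ab}\ot E_i\ra E_i$, and one must use $f_{a,\ab}\colon M_a\ot M_{\ab}\ra\omega_X^\vee$ to identify this with the action of the element $\xi_a\ot\xi_{\ab}$ for a local section $\xi$ with $f_{a,\ab}(\xi_a\ot\xi_{\ab})=\xi$; similarly $\lambda_i\ot\mathrm{Id}_{E_i}$ corresponds to $\langle\xi,\lambda_i\rangle e_i$. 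Tracking these contractions shows that requiring the $B$-action to annihilate every generator $(\mu-\kl)(\xi)$ is equivalent to imposing \eqref{M}, which is the essential compatibility. Everything else is a formal unwinding of the definition of ${\bf M}\Qb$ and of the idempotent decomposition, so once this identification is in hand the equivalence follows.
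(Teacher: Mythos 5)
Your proposal is correct and follows essentially the same route as the paper: the paper's proof is a two-sentence sketch constructing $E_i:=e_iE$ in one direction and the obvious $B$-module structure on $\bigoplus_i E_i$ in the other, which is exactly what you do in expanded form. Your extra care in matching the ideal-sheaf generators $(\mu-\kl)(\xi)$ with the relation \eqref{M} via the twisting isomorphisms $f_{a,\ab}$ is precisely the detail the paper leaves implicit.
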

\begin{proof}
If $\{E_i, \phi _a\}$ is an $\bf M$-twisted quiver sheaf, then it is obvious how to give a $B$-module structure
on $\oplus E_i$. Conversely, for a $B$-module $E$, define $E_i:= e_iE = M_{e_i}E$ and $\phi _a: M_a\ot e_{ta}E \ra e_{ha}E$. 
Then it is simple to check the collection $\{ E_i, \phi _a\}$ has an induced $\bf M$-twisted quiver sheaf structure.
\end{proof}

We remark that a similar path algebra and a similar result were already  introduced and used 
in the original work on quiver representation theory over a ground field 
by P. Gabriel \cite{PG}; and the $\cO_X$-algebra ${\bf M}Q$ 
was introduced in \cite[Section 5]{AG2}
and also used in \cite[Section 3]{GK}; and 
Proposition \ref{McA} is a generalization of results of \cite{CH} (see Lemma 2.1 and Section 4)
in the context of preprojective algebras and Proposition 5.1 of \cite{AG2} 
 in the context of quiver sheaves with no moment map relations.

\medskip

For $E\in \cA$, there is an exact sequence in $\cA$
\begin{eqnarray}\label{Eseq}
& &  0             \ra   E \stackrel{\epsilon}{\ra} \bigoplus _{i\in Q_0} \cH om _{\cO _X}(e_i B, E_i )   \\
&\stackrel{g}{\ra}& \bigoplus _{a\in \Qb _1}\cH om _{\cO _X} ( M_a\otimes_{\cO _X}e_{ta}B, E_{ha})  \nonumber \\
&\stackrel{m}{\ra}&  \bigoplus _{i\in Q_0'} \cH om _{\cO _X} (\omega _X^\vee \otimes_{\cO _X}e_i B, E_i ),\nonumber
\end{eqnarray}
where:
\begin{itemize}
\item $e_i$ denotes the constant $1$ in $M_{e_i}=\cO_X$;
\item we view $e_iB$ as an $\cO_X$-$B$ bimodule so that $\Hom _R (e_iB, E_j)$ is a {\em left} $B$-module;
\item $E_i := e_i E$ which is a $\cO _X$-submodule of $E$;
\item the $B$-homomorphisms $\ke$, $g$, $m$ are defined by:
for $p_i\in e_iB\subset B$, $\xi = f_{\ab , a}(\xi _{\ab}\ot \xi _a)$,
\begin{enumerate}
\item $\epsilon (e)(p_i) = p_ie$;
\item $g(\ka  )_a (x_a\otimes p_{ta}) = \ka _{ha} (x_ap_{ta}) -\phi _a (x_a\ot \ka _{ta} (p_{ta}))$ for $x_a\in M_a$;
\item $m(\gamma ) (\xi \otimes p_i) =
\sum _{a\in \Qb : ta = i} (-1)^{|a|} ( \phi _{\ab}(\xi _{\ab} \ot \kg _a (\xi _a\otimes p_i))
+ \kg _{\ab} (\xi _{\ab}\otimes (\xi _a  p_i)) ) $.
\end{enumerate}
\end{itemize}

\begin{Prop}
The sequence {\em (\ref{Eseq})} is exact.
\end{Prop}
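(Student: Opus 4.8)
The plan is to reduce the statement to Proposition \ref{PropS} by passing to stalks through local trivializations of the line-bundle twists. Exactness of the sequence \eqref{Eseq} of $\cO_X$-modules may be tested on stalks, and the stalk at a point $x$ is the filtered colimit, over the opens $U\ni x$, of the complexes of sections of \eqref{Eseq} over $U$. Since $\Qb_1$ is finite and $X$ is Gorenstein (so that $\omega_X$ is invertible), the affine opens $U\ni x$ on which all the $M_a$ and $\omega_X^\vee$ are simultaneously trivial form a neighborhood basis of $x$; it therefore suffices to prove that the complex of sections of \eqref{Eseq} over each such $U$ is exact, and then to invoke exactness of filtered colimits.

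On such a $U$ I would first fix trivializations $M_a|_U\cong\cO_U$ and $\omega_X^\vee|_U\cong\cO_U$ chosen so that every pairing $f_{b,\bar b}$ is carried to the standard multiplication $\cO_U\ot\cO_U\to\cO_U$; under the induced trivialization of $\omega_X$ each $\lambda_i\in\Gamma(X,\omega_X)$ restricts to a function $\lambda_i|_U\in\cO(U)$. With these choices ${\bf M}\Qb|_U$ becomes the untwisted path algebra $\cO(U)\Qb$, the ideal $(\mu-\kl)$ becomes the moment map relation \eqref{MR1} with $R=\cO(U)$, and $B|_U$ is identified with the deformed preprojective algebra $A=R\Qb/(\mu-\kl)$. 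Because $U$ is affine and all the sheaves in \eqref{Eseq} are quasi-coherent (with $E$ quasi-coherent), full faithfulness of $M\mapsto\widetilde M$ gives $\Gamma(U,\cH om_{\cO_X}(e_iB,E_i))=\Hom_{\cO(U)}(e_iA,E_i(U))$, and likewise for the other two terms; under the trivialization the twisted maps $\phi_a$ become the ordinary arrow actions, so $\epsilon$, $g$, $m$ of \eqref{Eseq} are carried to $\epsilon$, $g$, $m$ of \eqref{Vseq}. Hence the complex of sections of \eqref{Eseq} over $U$ is precisely the sequence \eqref{Vseq} for the $A$-module $V=E(U)$ over $R=\cO(U)$, which is exact by Proposition \ref{PropS}. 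Taking the filtered colimit over $U\ni x$ then yields exactness of the stalk at $x$, and hence of \eqref{Eseq}. Note that this route avoids any need for $\cH om$ to commute with stalks on the non-coherent object $e_iB$: we only use that $\Hom$ over an affine agrees with module $\Hom$ and that filtered colimits are exact.

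I expect the main obstacle to be bookkeeping rather than conceptual, concentrated in the identification of the three maps after trivialization. One must check that the chosen local trivializations are genuinely compatible with the symmetry convention $f_{\bar b,b}=f_{b,\bar b}$ and with the signs $(-1)^{|a|}$, so that the twisted relation \eqref{M} and the twisted maps $\epsilon$, $g$, $m$ match \eqref{MR1} and the untwisted $\epsilon$, $g$, $m$ on the nose. In particular, the half-twist encoded in $f_{a,\ab}\colon M_a\ot M_{\ab}\to\omega_X^\vee$ must be tracked through the definitions of $g$ and $m$ so that the sign cancellations underlying $m\circ g=0$ and the well-definedness computation \eqref{well-defined} are faithfully reproduced after trivialization. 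Once this compatibility is secured, the reduction to Proposition \ref{PropS} is immediate and the proof is complete.
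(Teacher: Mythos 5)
Your proof is correct, but it takes a genuinely different route from the paper's. The paper disposes of this proposition in one line --- ``the proof is parallel to the proof of Proposition~\ref{PropS}'' --- meaning that one is expected to rerun the inductive construction of $\ka$ along the length filtration of $B$, carrying the twists $M_a$ and the pairings $f_{b,\bar b}$ through every formula. You instead reduce the twisted statement to the already-proven untwisted one: test exactness on stalks, pass to a cofinal system of affine opens trivializing the finitely many line bundles $M_a$ and $\omega_X^\vee$ compatibly with $f_{b,\bar b}$, identify $B|_U$ with the deformed preprojective algebra over $R=\cO(U)$ and the section complex of \eqref{Eseq} with \eqref{Vseq} for $V=E(U)$, and conclude by exactness of filtered colimits. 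This is legitimate, and it is worth noting why: Proposition~\ref{PropS} is stated and proved over an arbitrary commutative ring $R$ (the field hypothesis enters only in the subsequent corollary), so invoking it with $R=\cO(U)$ is sound; the identification of $\Gamma(U,\cH om_{\cO_X}(e_iB,E_i))$ with $\Hom_{\cO(U)}(e_iA,E_i(U))$ needs $e_iB$ quasi-coherent (true, as a quotient of a direct sum of invertible sheaves) and, for $\Gamma(U,e_iB)=e_iA$, exactness of $\Gamma(U,-)$ on quasi-coherent sheaves over affine $U$; and your flagged assumption that $E$ is quasi-coherent is implicit in the paper's setting. What your approach buys is that the induction is not repeated and the twisted statement is exhibited as a formal consequence of the untwisted one; what it costs is the trivialization and quasi-coherence bookkeeping (which you honestly identify) and the restriction to quasi-coherent objects, whereas the paper's intended argument is global and makes no such hypotheses. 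Your parenthetical care about the sign conventions and the symmetry $f_{\bar b,b}=f_{b,\bar b}$ is well placed: choosing the trivialization of $M_{\bar b}$ to be the one induced by those of $M_b$ and $\omega_X^\vee$ via $f_{b,\bar b}$ makes both pairings go to ordinary multiplication, so the reduction of $\epsilon$, $g$, $m$ to their untwisted counterparts does hold on the nose.
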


\begin{proof} The proof is parallel to the proof of Proposition ~\ref{PropS}.
\end{proof}
As before, we replace $E$ by $F$ in  the sequence (\ref{Vseq}) and take $\cH om _{B}(E, \cdot )$
to obtain a sequence
\begin{eqnarray}\label{EFseq}
&0&  \ra \cH om _B (E, F) \ra \bigoplus _{i\in Q_0} \cH om_{\cO_X} (E_i, F_i) \\
&\ra& \bigoplus _{a\in \Qb _1} \cH om_{\cO_X}(M_a\ot E_{ta}, F_{ha} )
\ra \bigoplus _{i\in Q_0'} \cH om _{\cO_X} (\omega _X^\vee\ot E_i, F_i ) \nonumber
\end{eqnarray}
of  $\cO_X$-modules.
Complexes  (\ref{VWseq}) and  (\ref{EFseq}) are  direct generalizations  of \cite[(3.2)]{Dia}.

Let ${\bf C}(E, F)$ be a complex consisting of the last three terms of \eqref{EFseq}. Assume that
$E$ is locally free, i.e., by definition, $E_i$ are locally free for all $i\in Q_0$,
In this case, we will observe that ${\bf C}(E, F)$
is quasi-isomorphic to a complex computing  $\Ext ^i_{B} (E, F)$ for $i=0, 1$.
In what follows, we denote the $i$-th hypercohomology group of the complex ${\bf C}(E, F)$ by ${\bf H} ^i (X, {\bf C}(E, F))$.

\begin{Cor}\label{12}  Assume that $E$ is locally free. Then, for $i=0, 1$,
\[ \Ext ^i_{B} (E, F) \cong {\bf H} ^i (X, {\bf C}(E, F)). \]
\end{Cor}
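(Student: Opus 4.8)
The plan is to combine the sheaf-level exact sequence \eqref{EFseq} with the previously established module-theoretic computation, by passing from local $\cH om$ to global $\Ext$ via hypercohomology. First I would recall that the Corollary before the quiver-sheaf section shows, for $R$ a field, that $\Hom_R(e_iA, V_j)$ is an injective $A$-module and that \eqref{Vseq} is a partial injective resolution. The sheaf analogue is that the terms of \eqref{Eseq}, namely $\cH om_{\cO_X}(e_iB, E_i)$, $\cH om_{\cO_X}(M_a\ot e_{ta}B, E_{ha})$, and $\cH om_{\cO_X}(\omega_X^\vee \ot e_iB, E_i)$, should serve as the first three terms of a resolution of $E$ by objects that are, in a suitable sense, acyclic for the functor $\cH om_B(E,\cdot)$ when $E$ is locally free. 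So the first step is to verify that \eqref{Eseq} is indeed a resolution of $E$ in $\cA$ (this is the exactness Proposition just stated), and that applying $\cH om_B(E,\cdot)$ to it yields the complex $\mathbf{C}(E,F)$ of $\cO_X$-sheaves from \eqref{EFseq}.

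Next I would address the local-to-global mechanism. The key point is that $\Ext^i_B(E,F)$ is computed by a \emph{global} derived functor, whereas \eqref{EFseq} produces only \emph{local} $\cH om$ sheaves. The standard tool is the local-to-global $\Ext$ spectral sequence, or more directly the hypercohomology spectral sequence
\[ E_2^{p,q} = H^p\big(X, \mathcal{E}xt^q_B(E,F)\big) \Rightarrow \Ext^{p+q}_B(E,F). \]
To make this work I would argue that, because $E$ is locally free (i.e.\ each $E_i$ is locally free) and each $\cH om_{\cO_X}(e_iB,E_i)$ etc.\ is a flat/locally free $\cO_X$-module, the truncated complex $\mathbf{C}(E,F)$ computes the sheaf-$\Ext$ groups $\mathcal{E}xt^i_B(E,F)$ for $i=0,1$; this is the sheafified version of part (2) of the Corollary, proven by the identical adjunction argument localized at each point. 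Then the hypercohomology $\mathbf{H}^i(X,\mathbf{C}(E,F))$ collates the global cohomology of these sheaf-$\Ext$ groups, which is exactly what the spectral sequence identifies with the global $\Ext^i_B(E,F)$ in the low-degree range $i=0,1$.

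Concretely, I would run the two spectral sequences whose abutment is $\Ext^{p+q}_B(E,F)$: one coming from the filtration computing hypercohomology of $\mathbf{C}(E,F)$, and one from the local-to-global structure of $\Ext_B$. The isomorphism in degrees $0$ and $1$ follows from the five-term exact sequence of the spectral sequence, namely
\[ 0 \ra H^1(X,\mathcal{H}^0) \ra \mathbf{H}^1 \ra H^0(X,\mathcal{H}^1) \ra H^2(X,\mathcal{H}^0), \]
together with the identification $\mathcal{H}^0(\mathbf{C}(E,F)) \cong \mathcal{E}xt^0_B(E,F)$ and $\mathcal{H}^1(\mathbf{C}(E,F)) \cong \mathcal{E}xt^1_B(E,F)$ established in the previous step. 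Since $\mathbf{C}(E,F)$ is the $\cH om_B(E,\cdot)$-image of a locally free (hence locally acyclic) resolution of $E$, its cohomology sheaves in degrees $0,1$ are precisely the sheaf-$\Ext$ groups, and no higher obstruction intervenes in this range.

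The main obstacle I expect is bookkeeping the degree-shift and ensuring that \eqref{Eseq} truly functions as a \emph{local} injective (or at least $\cH om_B(E,-)$-acyclic) resolution rather than merely an exact sequence: unlike the pure module case, the sheaf $\cH om_{\cO_X}(e_iB,E_i)$ need not be injective as a $B$-module, so I cannot directly read off $\Ext$ from its cohomology without invoking the hypercohomology spectral sequence and the local-freeness hypothesis on $E$. The hypothesis that $E$ is locally free is exactly what guarantees that $\cH om_B(E,\cdot)$ commutes with the localization used in the adjunction identity $\Hom_{R_1}(N_1,\Hom_{R_2}(N_2,N_3)) = \Hom_{R_2}(N_2\ot_{R_1}N_1,N_3)$ cited after \eqref{VWseq}, so verifying that this adjunction sheafifies correctly under the local-freeness assumption is the crux of the argument.
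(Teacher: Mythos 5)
There is a genuine gap. You correctly identify the crux---the terms of \eqref{Eseq} are not injective $B$-modules, so one cannot read $\Ext^i_B(E,F)$ off the complex $\bC(E,F)$ directly---but your proposed repair, namely that $\mathcal{H}^q(\bC(E,F))\cong\mathcal{E}xt^q_B(E,F)$ follows from ``the identical adjunction argument localized at each point,'' does not go through. The module-level corollary you invoke requires $R$ to be a \emph{field}: injectivity of $\Hom_R(e_iA,V_j)$ as an $A$-module rests on $V_j$ being an injective $R$-module, which is automatic only over a field. At a stalk the base ring is $\cO_{X,x}$ and $F_{i,x}$ is a finitely generated module, hence not injective, so the localized sequence \eqref{Eseq} is merely exact and is not a partial injective resolution; the identification of $\mathcal{H}^1(\bC(E,F))$ with $\mathcal{E}xt^1_B(E,F)$ is exactly what remains to be proved. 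Moreover, even granting that identification, comparing the hypercohomology spectral sequence of $\bC(E,F)$ with the local-to-global $\Ext$ spectral sequence requires a natural map between them, not just abstractly isomorphic $E_2$-terms; otherwise the five-term sequences need not match.

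The paper closes this gap by the Gothen--King construction (their Section 3): choose $\cO_X$-injective resolutions $I_i^\bullet$ of the components $F_i$ and assemble the terms $\cH om_{\cO_X}(e_iB,I_i^k)$, $\cH om_{\cO_X}(M_a\ot e_{ta}B,I_{ha}^k)$, $\cH om_{\cO_X}(\omega_X^\vee\ot e_iB,I_i^k)$ along the pattern of \eqref{Eseq} into a partial \emph{injective} resolution $J^\bullet$ of $F$ in the category of $B$-modules (injectivity of each term now does follow from the adjunction, because $I_i^k$ is $\cO_X$-injective). Then $\Ext^i_B(E,F)$ is computed by global sections of $\cH om_B(E,J^\bullet)$, whose terms are flasque, so this equals $\mathbf{H}^i(X,\cH om_B(E,J^\bullet))$; and the local freeness of $E$ enters not through any localization of the adjunction but to make $\cH om_{\cO_X}(E_i,-)=E_i^\vee\ot(-)$ exact, which collapses the resulting double complex to $\bC(E,F)$ up to quasi-isomorphism in degrees $0,1$. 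Once this quasi-isomorphism is in hand, the statement follows immediately and the spectral-sequence bookkeeping you propose is not needed.
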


\begin{proof} Note that a partial injective resolution  $J^\bullet$ of $F$ can be
obtained from injective resolutions $I_i^\bullet$ of $F_i$ and \eqref{Eseq} since $\oplus _iI_i^k$ has
an induced $B$-module structure (see \cite[Section 3]{GK} for detail).
Note that $\cH om_B(E, J^{\bullet}) $ as an $\cO _X$-complex is
quasi-isomorphic (at $0, 1$) to \[ 0\ra \bigoplus _{i\in Q_0} E_i^\vee \ot F_i
\ra \bigoplus _{a\in \overline{Q}_1} M_a^\vee\ot E_{ta}^\vee \ot F_{ha} \ra
\bigoplus _{i\in Q_0'} \omega _X\ot E_i^\vee \ot F_i \] which is ${\bf C}(E,F)$. 
\end{proof}

We remark that Corollary 2.2 (2) and Corollary 2.5 directly  generalize Corollary 3.11 of  \cite{Dia}. 

\medskip

Now we introduce a bilinear form  $\langle , \rangle$ on  $\cA$ following \cite{JS}.
\begin{Def}
Define $\langle E, F\rangle$ to be
 \[ \dim \Ext ^0 _B(E, F) - \dim \Ext ^1 _B(E, F) + \dim \Ext ^1_B(F, E) - \dim \Ext ^0 _B(F, E).\]
\end{Def}

If $E_0$ denotes the $\cO _X$-coherent sheaf $\bigoplus _{i\in Q_0\setminus Q_0'} E_i$
(when $Q_0=Q_0'$, always $E_0=0$), we finally come to the first main result of this paper.

\begin{Prop}\label{num}\label{P} Suppose that $X$ is a smooth projective curve and $E$ and $F$ are locally free.
Assume either $E_0 =0$ or $F_0 =0$.
\begin{enumerate}
\item For $i=0, 1, 2, 3$., $${\bf H} ^i (X, \bC (E, F)) \cong {\bf H} ^{3-i}(X, {\bf C}(F, E))^\vee .$$
\item $\langle ,\rangle $ is numerically determined as follows:
\begin{eqnarray*}
&& \langle E,F\rangle \\  & = &\sum_{a\in{\overline{Q}_1}}(d(E_{ta})r(F_{ha})-d(F_{ha})r(E_{ta}) + d(M_a)r(E_{ta})r(F_{ha})\\
&&-(1-g) r(E_{ta})r(F_{ha}))+ 2\sum_{i\in Q_0'} (-d(E_i)r(F_i)+d(F_i)r(E_i)),
\end{eqnarray*}
where $d(E_i)$ and $r(E_i)$ stand for the degree and the rank of the locally free sheaf $E_i$, respectively.

\end{enumerate}
\end{Prop}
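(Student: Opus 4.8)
The plan is to reduce both assertions to a computation with the explicit three-term complex ${\bf C}(E,F)$ of locally free sheaves and then to feed the terms into Riemann-Roch. Throughout I would lean on the preceding Corollary, which gives $\Ext^i_B(E,F)\cong {\bf H}^i(X,{\bf C}(E,F))$ for $i=0,1$, so that $\chi(E,F)$ becomes expressible entirely through the hypercohomology of ${\bf C}(E,F)$ and ${\bf C}(F,E)$. The genuine content is (1); part (2) is then a bookkeeping consequence.

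For (1), I would view ${\bf C}(E,F)$ as a bounded complex of locally free sheaves in degrees $0,1,2$ and apply Grothendieck-Serre duality on $X$ in the form ${\bf H}^i(X,{\bf C}(E,F))\cong {\bf H}^{1-i}(X,{\bf C}(E,F)^\vee\otimes\omega_X)^\vee$, where ${\bf C}(E,F)^\vee$ is the termwise dual with transposed differentials placed in degrees $-2,-1,0$. The crux is then to produce a quasi-isomorphism ${\bf C}(E,F)^\vee\otimes\omega_X\cong {\bf C}(F,E)[2]$. On the level of terms this is a direct check: the middle terms match after the substitution $\omega_X\otimes M_a\cong M_{\ab}^\vee$ coming from $f_{a,\ab}\colon M_a\otimes M_{\ab}\cong\omega_X^\vee$, combined with the reindexing $a\leftrightarrow\ab$ (which exchanges $t$ and $h$); and the outer terms match because the hypothesis $E_0=0$ or $F_0=0$ forces the degree-$0$ summand $\bigoplus_{i\in Q_0}E_i^\vee\otimes F_i$ to be supported on $Q_0'$, exactly the index set of the degree-$2$ summand $\bigoplus_{i\in Q_0'}\omega_X\otimes E_i^\vee\otimes F_i$. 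Substituting into Serre duality then gives ${\bf H}^i(X,{\bf C}(E,F))\cong {\bf H}^{3-i}(X,{\bf C}(F,E))^\vee$, which is (1).

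Granting (1), I would compute $\chi(E,F)$ as the hypercohomological Euler characteristic of ${\bf C}(E,F)$. Indeed, the Corollary together with the $i=2,3$ cases of (1) identifies $\dim\Ext^1_B(F,E)$ and $\dim\Ext^0_B(F,E)$ with $\dim{\bf H}^2$ and $\dim{\bf H}^3$ of ${\bf C}(E,F)$, so that $\chi(E,F)=\sum_{i=0}^3(-1)^i\dim{\bf H}^i(X,{\bf C}(E,F))=\chi(X,C^0)-\chi(X,C^1)+\chi(X,C^2)$, where $C^0,C^1,C^2$ are the three terms and the last equality is additivity of Euler characteristics along the hypercohomology spectral sequence. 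I would then apply Riemann-Roch $\chi(X,\mathcal{G})=\deg\mathcal{G}+(1-g)\rank\mathcal{G}$ to each locally free summand, using $\deg(\mathcal{E}^\vee)=-\deg\mathcal{E}$, the tensor-product degree formula, and $\deg\omega_X=2g-2$. Combining the $C^0$ and $C^2$ contributions, the genus terms $(1-g)$ and $(g-1)$ cancel over $Q_0'$, and the hypothesis removes the stray $Q_0\setminus Q_0'$ terms, leaving $2\sum_{i\in Q_0'}(-d(E_i)r(F_i)+d(F_i)r(E_i))$; the $-\chi(X,C^1)$ contribution yields the arrow sum, and here I would use $\deg M_a+\deg M_{\ab}=2(1-g)$ (again from $f_{a,\ab}$) to reconcile the $d(M_a)$ and $(1-g)$ terms and to confirm the expression is antisymmetric in $E,F$, as it must be.

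The main obstacle is the differential-matching in (1): the termwise identification above must be promoted to an honest isomorphism of complexes, i.e.\ one must verify that the transposes of $g$ and $m$, after the substitution $\omega_X\otimes M_a\cong M_{\ab}^\vee$ and the relabeling $a\leftrightarrow\ab$, become (up to the correct signs) the maps $m$ and $g$ of ${\bf C}(F,E)$. This is exactly where the symmetry $f_{a,\ab}=f_{\ab,a}$ and the self-dual shape of the moment map relation~\eqref{M} must be used, and it is the only non-formal step; once it is in place, the Riemann-Roch bookkeeping in (2) is routine.
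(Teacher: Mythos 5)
Your proposal is correct and follows essentially the same route as the paper: the key identity ${\bf C}(E,F)^\vee\otimes\omega_X\cong{\bf C}(F,E)[2]$ (the paper states it with $E$ and $F$ interchanged, which is equivalent) combined with Serre duality gives (1), and then $\chi(E,F)=\chi(X,{\bf C}^0)-\chi(X,{\bf C}^1)+\chi(X,{\bf C}^2)$ plus Riemann--Roch gives (2). Your version merely makes explicit the termwise checks (use of $f_{a,\bar a}$, the hypothesis $E_0=0$ or $F_0=0$, and the matching of differentials) that the paper leaves implicit.
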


\begin{proof}
(1): Note that $\bC (F, E) ^\vee \ot \omega _X= \bC (E, F)[2]$. Combined with the Serre duality, this  implies
${\bf H} ^i (X, {\bf C}(E, F)) \cong {\bf H} ^i (X, \bC (F, E)^\vee \ot \omega _X [-2])
\cong {\bf H} ^{3-i} (X, \bC (F, E) ) ^\vee $.

(2):  Note that  $\chi (X, \bC (E,F)) = \langle E, F\rangle $ by (1) above. On the other hand,
$\chi (X, \bC (E,F))$ is equal to $\chi (X, \bC ^0(E,F)) - \chi (X, \bC ^1(E,F))+ \chi (X, \bC ^2(E,F))$, hence
the topological expression for $\langle E,F\rangle$ follows from the Riemann-Roch formula.
\end{proof}


\section{Wall-crossings}

From now on, let $X$ be a smooth projective curve, let $\lambda _i=0$ for all $i\in Q_0'$, and let
$Q_0\setminus Q_0' = \{ 0\}$. In the abelian category $\cA '$ of twisted quiver sheaves, we will
consider stability conditions for $\tau\in \RR _{>0}$ and, in the framework of Joyce-Song theory \cite{JS}, we will
define the generalized Donaldson-Thomas invariants
using the moduli space of $\tau$-semistable objects in $\cA '$. We will derive a wall-crossing formula
following the approach of  Chuang, Diaconescu, and Pan \cite{Dia, CDP1, CDP2}.

\subsection{Chamber structures}\label{chamber} In this subsection, for $\tau\in \RR _{>0}$
we introduce the notion of a $\tau$-stability on twisted quiver sheaves
and show that for each fixed numerical class with a minimal framing
the stability space $\RR _{>0}$ has a finite number of critical values.
The precise definition of critical values is not important. The relevant required property will be only that
there are no strictly $\tau$-semistable quiver sheaves for every noncritical value $\tau$.

Let $K$ be a nonzero complex vector space.
Denote by $\cA '$ the abelian category of $\bf M$-twisted quiver sheaves $E$ with
$E_0=K^S\ot\mathcal{O}_X$ for some finite set $S$ (depending on $E$).
In this category $\cA'$, a morphism from $(E_i, \phi _a)$
to $(E_i', \phi _a')$ is
by definition a usual morphism as $\bf M$-twisted quiver sheaves
with the framing condition that the attached $\mathcal{O}_X$-homomorphism $K^S\ot \mathcal{O}_X\ra K^{S'}\ot \mathcal{O}_X$
is a block matrix $(c^{s,s'})_{(s,s')\in S\times S'}$, $c^{s,s'}\in \CC$.
It is straightforward to check that
the category $\cA '$ is an abelian category.

Let $E\in \cA '$ and
$\tau \in \RR _{>0}$ be the stability parameter.
For a  nonzero $\bf M$-twisted quiver sheaf $E$ we define the $\tau$-slope of $E$ to be
$$\mu_{\tau}(E):=\frac{\deg(\bigoplus _{i\ne 0} E _i)}{\rank(\bigoplus _{i\ne 0} E _i)}
+\frac{\tau \cdot \rank E_0}{\rank (\bigoplus _{i\ne 0} E _i))}\in (-\infty, \infty ].$$

\begin{Def}
A nonzero object $E$ of $\cA '$ is called {\em $\tau$-(semi-)stable}
if $\mu_\tau(F)(\leq)< \mu_\tau(E)$ for any nonzero proper subobject $F$ of $E$.
\end{Def}
The definitions of $\tau$-slope of $E$ and $\tau$-(semi-)stability in Definition 3.1 
are generalizations of a slope function and a (semi-)stability condition introduced in \cite{Dia}, 
and furthermore, belong to the class of slope stability conditions for quiver sheaves introduced 
in \cite{AG1,{AG2}} for a larger family of parameters.

\medskip

Let
$r=\rank (\bigoplus _{i\ne 0} E _i)$, $v=\rank E_0$, and $d=\deg (\bigoplus _{i\ne 0} E _i)$.
If $E$ is strictly $\tau$-semistable, that is, $\tau$-semistable but not $\tau$-stable, then $\tau$ must be of form
\begin{eqnarray}\label{criticaltau}
&\tau=\frac{rd'-r'd}{r'v}\; {\rm or}\; \tau=\frac{r'd-rd'}{(r-r')v} \label{Cr}
\end{eqnarray}
for some $r', d'\in \ZZ$ with $1\leq r' \leq r-1$.

Let us consider the set $C(\cA ') =\dim K \cdot \mathbb{N} \times (\NN \times \ZZ )^{Q_0'}$ of numerical classes of
twisted quiver bundles. 
The class of $E$ has $\rank E_0$ at the first entry, $\rank E_i$ at the middle one for $i\in Q_0'$, and
$\deg E_i$ at the last one for $i\in Q_0'$.
Let $\gamma \in C(\cA ')$ and $v_0(\gamma )$ be the first entry of $\gamma$. Suppose that $v_0(\gamma )=\dim K$. 
Then by a generalization of \cite[Lemma 4.7]{Dia} there is a number $N(\gamma )$ such that
there are no strictly $\tau$-semistable objects with numerical class $\gamma $ if $\tau\ge N(\gamma )$.
Let $C(\gamma )$ be the set of all possible positive values $\tau \le N(\gamma )$ in (\ref{criticaltau})
so that for $\tau\notin C(\gamma)$ there are no strictly $\tau$-semistable objects with $\gamma$-class.
Note that $C(\gamma )$ has no accumulation points in $\RR$. Hence, $C(\gamma )$ is a finite set.
We call an element of $C(\gamma )$ a {\em critical value}.

\subsection{Chern-Simons functionals}
Let $\M$ be the moduli stack parameterizing all objects $E$ of $\cA '$.
In order to apply the Joyce-Song theory, we need a local description
of $\M$ as a critical locus of a holomorphic function on a complex domain
(see \cite[Theorems 5.4 and 5.5]{JS} which makes use of Miyajima's results in \cite{M}).
The theorems below are straightforward generalizations of
\cite[Theorems 7.1 and 7.2]{Dia}. In particular, the Chern-Simons functional  (\ref{CS}) is a 
direct generalization of one in \cite[(7.7)] {Dia}.

Let $\cA ' _{\le 1}$ be a subcategory of $\cA '$ of an object $E$ with $(E)_0=K \otimes \cO_X$ or $0$. 
Let $\mathcal{M}^{si}$ be the coarse moduli space of  simple objects in $\cA ' _{\le 1}$.
\begin{Thm}\label{ld1}
For every $[E]\in  \mathcal{M}^{si}(\mathbb{C})$, the analytic germ of
$\mathcal{M}^{si}(\mathbb{C})$ at $[E]$ is isomorphic to $(\mathrm{Crit}(f), u)$ for some
holomorphic function
$f:U \rightarrow \mathbb{C}$ on a finite dimensional complex manifold $U$, where $u$ is a point of $U$.
\end{Thm}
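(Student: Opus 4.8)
The plan is to implement, in the present twisted-quiver setting, the gauge-theoretic argument of \cite[Theorems 7.1 and 7.2]{Dia}, which in turn realizes \cite[Theorems 5.2 and 5.3]{JS} by means of Miyajima's Kuranishi construction \cite{M}; the one genuinely new ingredient is the degree-$3$ self-duality established in Proposition~\ref{num}. First I would fix the underlying $C^\infty$-data of a simple object $E=(E_i,\phi_a)$ of $\cA'_{\le 1}$ and describe the nearby objects as deformations of Dolbeault type: a point close to $E$ is a perturbation $(\bar{\partial}+\alpha,\, \phi_a+\beta_a)$ of the holomorphic structures on the $E_i$ together with the twisted morphisms, subject to the integrability conditions (that the perturbed $\bar{\partial}$ define holomorphic structures and the perturbed $\phi_a$ be holomorphic) and to the moment map relation~\eqref{M}. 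The governing deformation complex is a Dolbeault model of $\bC(E,E)$, so its hypercohomology is ${\bf H}^i(X,\bC(E,E))$, agreeing with $\Ext^i_B(E,E)$ for $i=0,1$ by the preceding Corollary; the deformation space is ${\bf H}^1(X,\bC(E,E))$ and, by Proposition~\ref{num}(1), the obstruction space ${\bf H}^2(X,\bC(E,E))$ is canonically dual to it.

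Next I would write down the generalized holomorphic Chern--Simons functional $CS$ on this (infinite-dimensional) space of deformation data, following the ADHM recipe of \cite{Dia}: $CS$ is a cubic expression assembled from the $\bar{\partial}$-operators, the morphisms $\phi_a$, the structure isomorphisms $f_{a,\bar a}$ and the sections $\lambda_i$, arranged so that its Euler--Lagrange equation is precisely the integrability condition together with the moment map relation. The decisive point is that $CS$ should exist as a scalar functional rather than merely as a critical equation, and this rests on the trace--integration pairing on $\bC(E,E)$ being graded-symmetric of degree $-3$ and cyclic for the composition product; this is the cochain-level incarnation of the Serre-duality isomorphism of Proposition~\ref{num}(1), with $\omega_X$ furnishing the residue that makes the pairing nondegenerate.

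Finally I would invoke Miyajima's slice theorem \cite{M} to pass to a finite-dimensional model. Choosing harmonic representatives produces a finite dimensional complex manifold $U$ with $T_uU\cong {\bf H}^1(X,\bC(E,E))$ and a holomorphic function $f:=CS|_U$ whose critical locus $\mathrm{Crit}(f)$ is the Kuranishi space; the associated Kuranishi map $\kappa\colon {\bf H}^1(X,\bC(E,E))\to {\bf H}^2(X,\bC(E,E))\cong {\bf H}^1(X,\bC(E,E))^\vee$ agrees with $df$ under the duality, so that $\mathrm{Crit}(f)=\kappa^{-1}(0)$. Since $E$ is simple, $\Ext^0_B(E,E)=\CC$ and the gauge group acts with only scalar stabilizers, whence this germ $(\mathrm{Crit}(f),u)$ is exactly the analytic germ of $\mathcal{M}^{si}(\CC)$ at $[E]$, as in \cite[Theorem 5.3]{JS}.

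I expect the main obstacle to be the second step: realizing the abstract duality of Proposition~\ref{num}(1) at the cochain level as a cyclic pairing compatible with the homotopy-associative composition on the Dolbeault model of $\bC(E,E)$. This cyclicity is exactly what forces the Kuranishi obstruction map to be a gradient, hence what guarantees that a single holomorphic function $f$ --- and not merely a map to the obstruction space --- exists. The framing vertex $0$ and the moment map relation must be treated with particular care here, since, as already in Proposition~\ref{num}, it is the moment map relation that produces the degree-$3$ self-duality in the first place.
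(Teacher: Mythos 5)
Your proposal follows essentially the same route as the paper: the paper's proof consists of citing \cite[Theorems 7.1 and 7.2]{Dia} (which implement \cite[Theorems 5.2 and 5.3]{JS} via Miyajima's Kuranishi construction \cite{M}) and then writing down explicitly the generalized Chern--Simons functional whose critical equations are the holomorphicity of the $\tilde{\phi}_a$ and the moment map relation, exactly as you outline. Your additional discussion of the cyclic trace pairing underlying Proposition~\ref{num}(1) is a correct identification of why the scalar functional exists, a point the paper settles simply by exhibiting the formula (note only that on a curve the integrability of the perturbed semiconnections is automatic, and that the framing vertex carries no connection deformation, consistent with your closing caveat).
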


Let $S$ be an $\mathrm{Aut} (E)$-invariant subscheme of $\mathrm{Ext}^1 _{\cA '}(E, E)$ parameterizing
a versal family of objects in $\M (\CC )$ near $E$.
\begin{Thm}\label{ld2}
For every $E\in  \M (\mathbb{C})$ and  a maximal compact
subgroup $G$ of $\mathrm{Aut}(E)$, the analytic germ of
$(S, 0)$ is $G^{\CC}$-equivariantly isomorphic to $(\mathrm{Crit}(f), 0)$ for some $G^{\CC}$-invariant
holomorphic function
$f:(\mathrm{Ext} ^1 _{\cA '}(E, E), 0) \rightarrow (\mathbb{C}, 0)$, where $G^{\CC}$ is the complexification of
$G$ in $\mathrm{Aut} (E)$.
\end{Thm}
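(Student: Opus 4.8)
Theorem \ref{ld2} claims that for every $E \in \M(\CC)$ with maximal compact subgroup $G \subset \mathrm{Aut}(E)$, the germ of a versal deformation space $(S,0)$ is $G^\CC$-equivariantly isomorphic to the critical locus of a $G^\CC$-invariant holomorphic function on $(\Ext^1_{\cA'}(E,E), 0)$. This is the local Chern–Simons / critical-chart description needed to feed $\M$ into the Joyce–Song machine. Let me sketch how I'd prove it.
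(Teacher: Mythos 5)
Your proposal stops exactly where the proof needs to begin: after restating the theorem you write ``Let me sketch how I'd prove it'' and supply nothing further. There is no argument to evaluate, so this is a genuine gap --- the entire content is missing.

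What the paper actually does, and what any proof along these lines must contain, is the following. One passes to the gauge-theoretic picture: an object $E=(E_i,\phi_a)$ of $\cA'$ is encoded as a tuple $(\hat E_i,\bar\partial_{E_i},\phi^0_a)$ of $C^\infty$ bundles with semiconnections (flat automatically, since $X$ is a curve) and smooth sections $\phi^0_a$. Nearby objects are parametrized by perturbations $(A_i,\varphi_a)$, and one writes down a holomorphic Chern--Simons functional
\[
CS(A_i,\varphi_a)=\int_X \mathrm{Tr}\Bigl(\sum_{a\in Q_1}\varphi_{\ab}\,\bar\partial_a\varphi_a+\sum_{a\in \overline Q_1,\ ha\in Q_0'}(-1)^{|a|}A_{ha}\tilde\phi_a\tilde\phi_{\ab}\Bigr),
\]
adapted to the double quiver $\overline Q$ (this is the generalization of \cite[(7.7)]{Dia}). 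The two essential verifications are: (i) the critical equations of $CS$ are exactly the holomorphicity of $\tilde\phi_a=\phi^0_a+\varphi_a$ with respect to the deformed semiconnections together with the moment map relation \eqref{M}, so $\mathrm{Crit}(CS)$ parametrizes objects of $\cA'$ near $E$; and (ii) $CS$ is gauge-invariant, which is what produces the $G^{\CC}$-equivariance in the statement. Finally one must cut down from the infinite-dimensional configuration space to the finite-dimensional germ on $\Ext^1_{\cA'}(E,E)$; this uses Miyajima-type slice/versality results as packaged in \cite[Theorems 5.2 and 5.3]{JS}, following the proofs of \cite[Theorems 7.1 and 7.2]{Dia} verbatim once the functional has been replaced. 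Without at least identifying the functional and checking (i) and (ii), nothing has been proved.
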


The proofs of \cite[Theorems 7.1 and 7.2]{Dia} work for the general case after
the replacement of the Chern-Simons functional \cite[(7.7)]{Dia} according to
the double quiver $\overline{Q}$. In what follows, we describe the Chern-Simons functional for the general case.

Let $E=(E _i, \phi _a)_{i\in Q_0, a\in \overline{Q}_1}$ be a framed twisted quiver bundle on $X$ and let $\hat{X}$ denote
the complex manifold associated to $X$.
Then there is the
gauge-theoretical interpretation $(\hat{E}_i, \bar{\partial}_{E_i}, \phi _a^0)_{i\in Q_0', a\in \overline{Q}_1}$ of $E$, i.e.,
$\hat{E_i}$ is $E_i$ regarded as a $C^{\infty}$ complex vector bundle on $\hat{X}$,
$$ \bar{\partial}_{E_i} :C^{\infty}(\hat{E}_i) \rightarrow C^{\infty}(\hat{E}_i\otimes  \Lambda^{0,1} T^{*}_{\hat{X}})$$ 
is the unique semiconnection on $\hat{E_i}$ such that local holomorphic sections of $E_i$ are translated
into horizontal sections of $\bar{\partial}_{E_i}$,
and $\phi_a^0 \in C^{\infty}(\hat{M}_{ta}^\vee \ot \hat{E}_{ta}^\vee \ot \hat{E}_{ha})$ corresponds  to $\phi _a$.
Here the $(0,1)$-part of a usual connection is called a semiconnection (see \cite[Definition 9.1]{JS}).
Note that the flatness of $\bar{\partial}_{E_i}$ automatically holds  since $X$ is a curve.

 Now, the Chern-Simons functional $CS$ near $E$ is defined by
a generalization of \cite[(7.7)] {Dia}:
 \begin{eqnarray}\;\;\;\;\;\;\;\;\;\ \label{CS}
 CS(A_i,\varphi_a)=\int_X  \mathrm{Tr}(
\sum _{a\in Q_1} \varphi_{\ab}  \bar{\partial} _a\varphi_a
+\sum_{a\in \overline{Q}_1, ha\in Q_0'}(-1)^{|a|} A_{ha}\tilde{\phi}_{a}\tilde{\phi}_{\ab})
\end{eqnarray}
for
$$(A_i, \varphi _a) \in \prod_{i\in Q_0', a\in \overline{Q}_1} C^{\infty}(\mathrm{End}(\hat{E}_i)\otimes   \Lambda^{0,1} T^{*}_{\hat{X}})
\times C^{\infty}(\hat{M}_{ta}^\vee\ot \hat{E}_{ta}^\vee \ot \hat{E}_{ha}). $$
Here $\bar{\partial} _a$ is the semiconnection on $\hat{E}_{ta}^\vee\ot \hat{E}_{ha}$,
 $\tilde{\phi} _a = \phi _a ^0 + \varphi _a$, and the products in the integrand are naturally given by compositions and cup products
so that after all they are considered as elements in $\mathrm{End}(\hat{E}_{ta})\ot  \Lambda^{1,1} T^{*}_{\hat{X}}$.
The Chern-Simons functional is gauge-invariant and its critical equations are 
\begin{eqnarray}\label{ceq1} \bar{\partial}_a \varphi _a - \tilde{\phi}_a A_{ta} + A_{ha}\tilde{\phi}_{a} &=& 0, \;\forall a\in \overline{Q}_1;\\
   \label{ceq2}      \sum _{ha=i} (-1)^{|a|} \tilde{\phi }_a \tilde{\phi}_{\ab} &=&0, \; \forall i\in Q_0'.\end{eqnarray}
We note that \eqref{ceq1}  is the holomorphic condition on $\tilde{\phi} _a$ with respect to the new semiconnection
$(\bar{\partial} _{E_{ta}} + A_{ta})^\vee\ot (\bar{\partial} _{E_{ha}}+A_{ha}) $
and  \eqref{ceq2}  is the moment map relation on $\tilde{\phi} _a$.

\subsection{Ringel-Hall type algebras}

Recall that $\M$ denotes the moduli stack parameterizing all objects $E$ of $\cA '$.
We call a pair $(\X, \rho )$ a $\M$-valued stack function if
$\X$ is an Artin stack over $\CC$ and $\rho : \X\ra \M$ is a representable 1-morphism.
Let $\SF (\M)$ be the \lq Grothendieck group'  of $\M$-valued stack functions, i.e.,
the quotient  group of the free abelian group generated by stack functions, whose quotient is given
by the subgroup spanned by all elements of form 
\[    (\X, \rho ) - ((\Y, \rho_{ |_\Y}) + (\X \setminus \Y, \rho _{|_{\X \setminus \Y}})) \]
for a closed substack $\Y$ of $\X$.

There is a multiplication structure on $\SF (\M )$ for which the multiplication
$$  (\X _1, \rho _1) * (\X _2, \rho _2) $$ is defined to be the fiber product $(\X, \rho )$
in diagram
\[ \begin{CD}  \X  @>>> \mathfrak{Exact}(\M ) @>>{\pi _2}> \M \ ,\\
        @VVV @VV{\pi _1\times \pi _3}V @. \\
        \X _1 \times _{\CC}\X_2 @>>{(\rho _1,\rho _2 )}> \M \times _{\CC}\M@.
        \end{CD}\]
where:
\begin{itemize}
\item $ \mathfrak{Exact}(\M )$ is an Artin stack parameterizing short exact sequences in $\M$
and $\pi _i$ is the obvious $i$-th projection;
\item the square is the fiber product and $\rho$ is the composition of the upper arrows.
\end{itemize}
The multiplication is associative by \cite[Theorem 5.2]{J2}.
The induced algebra $\SF (\M )$ is called the {\em Ringel-Hall type algebra}.

Denote by $\M _{\ge 2}$ the moduli stack parameterizing all objects $E$ of $\cA '$ with $\rank E_0\ge 2\dim K$.
Let $\SF (\M _{\le 1}' )$ be the quotient algebra of $\SF (\M )$ factored by the ideal generated
by all $\rho : \X\ra \M$ which factor though $\M _{\ge 2}$. Finally we consider the subalgebra
$\SF (\M _{\le 1} )$ of $\SF (\M _{\le 1}' )$ generated by all $\rho : \X\ra \M$ which factor though
the moduli stack of locally free objects.

By Proposition ~\ref{num}, we may define an antisymmetric bilinear form
$\langle  , \rangle : (\dim K \cdot \NN \times (\NN \times \ZZ ) ^{Q_0'} )^2\ra \ZZ $.
Let $L(\cA ')$ be the $\QQ$-vector space with basis
$e_{\gamma}$, $\gamma \in \{0, \dim K \} \times (\NN \times \ZZ ) ^{Q_0'}$, equipped with
a Lie algebra structure given by $$[e_{\gamma}, e_{\tilde{\gamma}}]:=
\left\{\begin{array}{ll} (-1)^{ \langle \kg,\tilde{\kg}\rangle }\langle \kg, \tilde{\kg}\rangle
e_{\gamma+\tilde{\gamma}} & \text{ if } \gamma _0 + \tilde{\gamma} _0 \le \dim K ,\\
             0 & \text{ otherwise}\end{array}\right.$$
             (see \cite[Definition 5.13]{JS}).

In the below, we let $B\CC ^*$ denote the classifying stack of the multiplicative group $\CC ^*$.
By the local descriptions, Theorems ~\ref{ld1} and ~\ref{ld2}, of the moduli spaces we will have this.

 \begin{Thm}\label{LieHom}
There is a Lie algebra homomorphism
\[ \Psi : \SF _{\mathrm{alg}}^{\mathrm{ind}} (\M _{\le 1}) \ra L(\cA ') \]
satisfying  \[\Psi ([Z\times B\CC ^*,\rho ]) = -\chi(Z  , \rho ^*\nu ^B_{\M_{\le 1}} ) e _{\gamma} ,\]
where:

\begin{itemize}

\item $\SF _{\mathrm{alg}}^{\mathrm{ind}}(\M _{\le 1})$ is a certain subalgebra of $\SF (\M _{\le 1})$, spanned
by stack functions with algebra stabilizers supported on virtually indecomposable objects;
\item $Z$ is a variety and $\rho ^*\nu ^B_{\M_{\le 1}}$ is the $\ZZ$-valued constructible
function induced from the Behrend
function $\nu ^B_{\M _{\le 1}}$ for $\M _{\le 1}$;

\item $\chi (Z  , \rho ^*\nu ^B_{\M_{\le 1}}  )$ is defined to be the weighted topological Euler characteristic
$\sum _{n\in\ZZ } n \chi( (\rho ^*\nu ^B_{\M_{\le 1}})^{-1}(n))$.

\end{itemize}

\end{Thm}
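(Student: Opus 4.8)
The plan is to transcribe the Joyce--Song construction of \cite[Theorems 3.16 and 7.13]{JS}, whose two essential geometric inputs are already available: the truncated Calabi--Yau symmetry of Proposition \ref{num}, and the local presentations of $\M_{\le 1}$ as critical loci of the Chern--Simons functional furnished by Theorems \ref{ld1} and \ref{ld2}. First I would define $\Psi$ on the generators $[Z\times B\CC^*,\rho]$ by the displayed weighted-Euler-characteristic formula, and then extend it to all of $\SF_{\mathrm{alg}}^{\mathrm{ind}}(\M_{\le 1})$ using Joyce's structure theory for stack functions with algebra stabilizers supported on virtual indecomposables: every such stack function is, modulo the defining relations of $\SF$, a $\QQ$-combination of generators of the form $[Z\times B\CC^*,\rho]$, the factor $B\CC^*$ recording the scalar automorphisms of an indecomposable object. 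The target Lie bracket on $L(\cA')$ carries the sign $(-1)^{\chi(\gamma,\tilde\gamma)}$ and the cutoff $\gamma_0+\tilde\gamma_0\le\dim K$; the cutoff is automatically respected on the source because we have already passed to the quotient $\SF(\M_{\le 1}')$ killing every $\rho$ factoring through $\M_{\ge 2}$.

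The heart of the matter is the verification of the two Behrend function identities of \cite[\S5]{JS} for $\nu^B_{\M_{\le 1}}$. By Theorems \ref{ld1} and \ref{ld2}, near any $E$ the stack $\M_{\le 1}$ is, $\mathrm{Aut}(E)$-equivariantly and analytically-locally, the critical locus of the Chern--Simons functional $CS$ on $\Ext^1_{\cA'}(E,E)$, with critical equations \eqref{ceq1}--\eqref{ceq2}; hence $\nu^B_{\M_{\le 1}}$ is computed from the Milnor fibre of $CS$ exactly as in the honest Calabi--Yau threefold case. I would then check the multiplicativity identity $\nu(E_1\oplus E_2)=(-1)^{\chi(E_1,E_2)}\nu(E_1)\nu(E_2)$ together with the integral identity over $\PP(\Ext^1(E_1,E_2))$ and $\PP(\Ext^1(E_2,E_1))$, both of which reduce, via the critical-chart description, to the duality $\Ext^i\cong(\Ext^{3-i})^\vee$ and the numerical antisymmetry of $\chi$ --- precisely the two assertions of Proposition \ref{num}. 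This step is a line-by-line adaptation of \cite[Theorems 7.1 and 7.2]{Dia} and \cite{CDP1,CDP2}, the only substantive change being the moment-map form of $CS$ attached to $\overline{Q}$.

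Granting the Behrend identities, the homomorphism property is formal. The Lie bracket on $\SF_{\mathrm{alg}}^{\mathrm{ind}}(\M_{\le 1})$ is the commutator of the Ringel--Hall product $*$, which is associative by \cite[Theorem 5.2]{J2}, so that $\SF_{\mathrm{alg}}^{\mathrm{ind}}(\M_{\le 1})$ is a Lie algebra. The weighted map $\Psi$ intertwines this bracket with the one on $L(\cA')$ because, after projecting to virtual indecomposables, the structure constants of $*$ are governed by the extension integrals controlled by the Behrend identities, and these integrals evaluate to $(-1)^{\chi(\gamma,\tilde\gamma)}\chi(\gamma,\tilde\gamma)$; this is exactly the mechanism of \cite[Theorems 3.16 and 7.13]{JS}. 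The sign $(-1)^{\chi}$ in the bracket of $L(\cA')$ is the shadow of the Behrend-function sign in the multiplicativity identity, which is what distinguishes this Behrend-weighted morphism from Joyce's earlier unweighted one.

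The main obstacle, and the only place requiring work beyond transcription, is that $\cA'$ is not a genuine Calabi--Yau threefold category: Proposition \ref{num}(1) supplies Serre-type duality only at the level of the truncated three-term complex $\bC(E,F)$, and only under the hypothesis $E_0=0$ or $F_0=0$. I must confirm that this truncated duality --- which is \emph{forced} by the moment-map relation \eqref{M} --- is nonetheless enough to endow the Hessian of $CS$ at a critical point with the symmetry demanded by \cite[\S5]{JS}, and that the framing condition defining $\cA'$ does not disturb the short-exact-sequence bookkeeping underlying the Behrend identities. Once this compatibility is in place, the remaining verifications are routine adaptations of \cite{JS, Dia, CDP1, CDP2}.
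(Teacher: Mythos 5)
Your proposal is correct and follows essentially the same route as the paper, whose entire proof is the citation ``See \cite[section 2.2 and Theorem 3.4]{CDP1}'' together with the remark that the result follows from the local descriptions in Theorems \ref{ld1} and \ref{ld2}. You have simply unpacked what that citation contains --- the Joyce--Song construction fed by the critical-chart presentations of the Chern--Simons functional and the numerical antisymmetric Euler form of Proposition \ref{num} --- and you correctly flag the truncated Calabi--Yau duality as the one point needing care.
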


\begin{proof} 
 Theorem 3.2 and Theorem 3.3  imply that
the Behrend function $\nu ^B_{\M _{\le 1}}$ on $\M _{\le 1}$ has the same property as one in \cite[Theorem 5.11]{JS}
and \cite[Theorem 7.4]{Dia}. 
In the analogy with \cite[Theorem 5.14]{JS}, we get the result (see also \cite[Section 2.2 and Theorem 3.2]{CDP1}).
 \end{proof}

\subsection{Harder-Narasimhan filtrations}

In this section, using Harder-Narasimhan filtrations
we express the stack function representing $\M ^{ss}_{\tau _-}$
in terms of those representing $\M ^{ss}_{\tau _+}$ and unframed moduli spaces.
By a purely algebraic Lemma in \cite{CDP1}
this expression induces a
wall-crossing formula. From now on, we let
$\tau _0\in C(r,d)$, $\tau_+ >\tau _0 $ and $\tau_-<\tau _0$ such that
there are no critical values between intervals $(\tau_0, \tau _+  ]$ and $[\tau_- , \tau _0  )$.

\medskip

For  $E \in \cA '$ and $\tau \in \RR _{>0}$, it is easy to see that
there is a unique filtration
$$0=E_0 \subset E_1 \subset E_2 \ldots \subset E_n=E$$
such that $E_k/E_{k-1}$ is $\tau$-semistable
and  $\mu_\tau(E_{k-1}/E_{k-2})>\mu_\tau(E_k/E_{k-1})$ for  $k=1,\ldots n$.
This so-called {\em Harder-Narasimhan filtration} will lead us to the following.

\begin{Lemma}\label{HNf} Let $E\in \cA$ with $(E)_0=K\ot\cO_X$. TFAE.

\begin{enumerate}

\item $E$ is $\tau _0$-semistable

\item $E$ is $\tau _+$-semistable or
there is a unique subobject $E'$ of $E$ satisfying:
$E'$, $E/E'$ are $\tau _+$-semistable,  $(E')_0 = K\ot \cO _X$,
$\mu _{\tau _+}(E') > \mu _{\tau _+} (E/ E')$, and  $\mu _{\tau _0}(E') = \mu _{\tau _0} (E/ E')$.

\item
$E$ is $\tau _-$-semistable or
there is a unique subobject $E'$ of $E$ satisfying:
$E'$, $E/E'$ are $\tau _-$-semistable, $(E/E')_0 = K\ot \cO _X$,
$\mu _{\tau _-}(E') > \mu _{\tau _-} (E/ E')$,  and $\mu _{\tau _0}(E') = \mu _{\tau _0} (E/ E')$.

\end{enumerate}

\end{Lemma}

\begin{proof} (1) $\Rightarrow$ (2).  Let E be $\tau_0$-semistable.
Let $0=E_0 \subset E_1 \subset ... \subset E_n=E$ be the $\tau _+$ Harder-Narasimhan filtration of $E$.
We take $E':=E_1$.
 If $n=2$, $(E_1)_0\ne 0$ for otherwise,
 $\mu_{\tau _0}(E_1) = \mu _{\tau _+}(E_1) > \mu _{\tau _+}(E_2/E_1) \ge \mu _{\tau _0}(E_2/E_1)$ which
 is a contradiction to the $\tau _0$-semistability of $E$. We prove that $n$ cannot be larger than 2.
Suppose that $n\ge 3$, then there are  $i, j$ such that $1\le i< j\le n$ and
\begin{equation}\label{ij} \mu _{\tau _+} (E_i/E_{i-1})
= \mu _{\tau _0} (E_i/E_{i-1}) >  \mu _{\tau _+} (E_j/E_{j-1}) = \mu  _{\tau _0} (E_j/E_{j-1}) .\end{equation}
This induces a contradiction as follows.

a) When $(E_1)_0=0$, then $\mu_{\tau _0}(E_1)=\mu _{\tau _+}(E_1) > \mu _{\tau _+}(E)\ge \mu _{\tau _0} (E)$.
This is a contradiction to the $\tau _0$-semistability
of $E$.

b) When $(E_1)_0=K\ot\cO _X$, then $(E_i)_0=K\ot\cO_X$ for all $i\ge 1$ so that the ineqaulity
$\mu _{\tau _+}(E_i)>\mu _{\tau _+}(E)$ implies that $\mu _{\tau _0}(E_i)=\mu _{\tau _0}(E)$ for all $i$. This contradicts \eqref{ij}.

Now, by the uniqueness of Harder-Narasimhan filtrations, the proof of (1) $\Rightarrow$ (2) follows.

(2) $\Leftarrow$ (1). The nontrivial case is that $E$ is not $\tau _+$-semistable. Let $F$ be a nontrivial subobject of $E$ and let $F':=\mathrm{Ker}(F\ra E/E')$.
Then $\mu _{\tau _+} (F')\le \mu _{\tau _+}(E')$ (because $E'$ is $\tau _+$-semistable) and $\mu _{\tau _+}(F/F') \le
\mu _{\tau _+}(E/E')$ (because $E/E'$ is $\tau _+$-semistable). Now take the limit $\tau _+\ra \tau _0$ to the both inequalities in order to
conclude that $\mu _{\tau _0}(F)\le \mu _{\tau _0}(E)$ since  $\mu _{\tau _0}(E') = \mu _{\tau _0} (E/ E')$.

(1) $\Leftrightarrow$ (3). This follows by an argument similar to the proof of (1) $\Leftrightarrow$ (2).
\end{proof}

Let $\delta_{\tau}(\gamma)$ denote the stack function $[ \M^{ss}_{\tau}(\gamma),\rho]\in \SF (\M _{\le 1})$
for the natural open embedding $\rho$ of the moduli stack  $\M^{ss}_{\tau}(\gamma) \subset \M$
of  $\tau$-semistable objects of $\cA '$ with the numerical class $\gamma \in C(\cA ')$.
We use notation $\delta (\gamma )$ for $\delta _{\tau}(\gamma )$ if $v_0(\gamma ) = 0$.
Then the previous lemma will induce relationships between
$\delta _{\tau _{\pm}}(\gamma )$, $\delta _{\tau _0}(\gamma )$, and $\delta (\gamma )$
in the Ringel-Hall type algebra as
in Lemma ~\ref{Fi} below. Before describing the lemma, we will need the following index sets.
For $l\ge 1$, let \begin{eqnarray*} HN_+(\gamma , \tau _0 , l) =  \{ (\gamma _1,...,\gamma _l ) | \ \gamma _i \in C(\cA ), \\
\sum _i \gamma _i = \gamma ,    v_0(\kg _1) = v_0(\kg ),
\mu _{\tau _0} (\gamma _i) =\mu _{\tau _0}(\gamma )\ \forall i    \} \nonumber\end{eqnarray*}
 and \begin{eqnarray*} HN_-(\gamma , \tau _0 , l) =  \{ (\gamma _1,...,\gamma _l ) | \ \gamma _i \in C(\cA ), \\
\sum _i \gamma _i = \gamma ,    v_0(\kg _l) = v_0(\kg ),
\mu _{\tau _0} (\gamma _i) =\mu _{\tau _0}(\gamma )\ \forall i    \}. \nonumber\end{eqnarray*}

\begin{Lemma} \label{Fi} In  $\SF (\M _{\le 1})$, the followings hold.
\begin{enumerate}

\item \begin{eqnarray*} \delta _{\tau_0}(\gamma) &=& \delta _{\tau_+}(\gamma ) + \sum _{(\kg _1,\kg _2) \in HN_+(\kg, \tau _0, 2)}
 \delta _{\tau_+}(\gamma _1) * \delta  (\gamma _2) .\\
  \delta _{\tau_0}(\kg ) &=& \delta _{\tau_-}(\gamma ) + \sum _{(\kg _1,\kg _2) \in HN_-(\kg, \tau _0, 2)}
 \delta (\gamma _1) * \delta _{\tau_-} (\gamma _2) .\end{eqnarray*}

 \item \begin{eqnarray*} \kd _{\tau _+}(\kg ) &=& \sum _{l\ge 1}  (-1)^{l-1} \sum _{HN_+(\gamma , \tau _0, l )}
 \kd _{\tau _0} (\kg _1) * \kd (\kg _2) * ... * \kd (\kg _l) .\\
  \kd _{\tau _-}(\kg ) &=& \sum _{l\ge 1} (-1)^{l-1} \sum _{HN_-(\gamma , \tau _0, l )} \kd  (\kg _1) * \kd (\kg _2) * ... * \kd _{\tau _0} (\kg _l).
 \end{eqnarray*}
 \item
 \begin{eqnarray*} & \kd _{\tau _-} (\kg ) = \kd _{\tau _+}(\kg )  + \\  &\sum _{l\ge 2} (-1)^{l-1} \sum
 _{HN_-(\gamma , \tau _0, l )} \kd (\kg _1)*...*\kd (\kg _{l-2}) * [\kd (\kg _{l-1}), \kd _{\tau _+} (\kg _l )  ] .\nonumber
 \end{eqnarray*}
\end{enumerate}
\end{Lemma}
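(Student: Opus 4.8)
The plan is to prove the three parts in sequence, deriving (1) directly from the Harder-Narasimhan dichotomy of Lemma~\ref{HNf}, then bootstrapping (1) into the full expansions (2), and finally combining the two identities in (2) to obtain the commutator form (3).

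For part (1), I would read off the stratification of the moduli stack $\M^{ss}_{\tau_0}(\gamma)$ that is encoded in Lemma~\ref{HNf}. By that lemma, a $\tau_0$-semistable object $E$ is either already $\tau_+$-semistable, or it admits a unique two-step filtration $0\subset E'\subset E$ with $E'$, $E/E'$ both $\tau_+$-semistable, $(E')_0=K\otimes\cO_X$, strict $\tau_+$-destabilization, and equal $\tau_0$-slopes. The first alternative contributes $\delta_{\tau_+}(\gamma)$; the second alternative, decomposed according to the numerical classes $\gamma_1=[E']$ and $\gamma_2=[E/E']$, contributes a locally closed substack that is isomorphic to the stack of extensions of a $\tau_+$-semistable $E/E'$ by a $\tau_+$-semistable $E'$. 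Since $v_0(\gamma_1)=v_0(\gamma)$, the quotient $E/E'$ has $(E/E')_0=0$, so $\delta(\gamma_2)$ is the unframed semistable stack function; and the constraint $\mu_{\tau_0}(\gamma_1)=\mu_{\tau_0}(\gamma_2)=\mu_{\tau_0}(\gamma)$ is exactly the index condition defining $HN_+(\gamma,\tau_0,2)$. Identifying this extension stack with the fiber product defining the product $*$ in $\SF(\M_{\le 1})$ gives the term $\delta_{\tau_+}(\gamma_1)*\delta(\gamma_2)$, and the stack-function relation on closed substacks yields the first displayed identity of (1). The second identity is the mirror statement obtained from alternative (3) of Lemma~\ref{HNf}, where now $(E/E')_0=K\otimes\cO_X$ forces $E'$ to be unframed.

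For part (2), I would iterate part (1). The first identity of (1) reads $\delta_{\tau_0}(\gamma)=\delta_{\tau_+}(\gamma)+\sum_{HN_+(\gamma,\tau_0,2)}\delta_{\tau_+}(\gamma_1)*\delta(\gamma_2)$. Solving for $\delta_{\tau_+}(\gamma)$ and substituting recursively for each $\delta_{\tau_+}(\gamma_1)$ appearing on the right produces, after collecting terms by the number $l$ of factors, the alternating sum over $HN_+(\gamma,\tau_0,l)$ with the leading factor carrying slope $\tau_0$ and all trailing factors unframed. The sign $(-1)^{l-1}$ is the usual Möbius-inversion sign from inverting a unipotent triangular relation; I would make this precise by induction on $l$, using associativity of $*$ (cited from \cite[Theorem 5.2]{J2}) to regroup nested products. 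The second identity of (2) is obtained identically from the second identity of (1), with the $\tau_0$-factor now in the last slot and the framing index condition $v_0(\gamma_l)=v_0(\gamma)$. I expect the finiteness needed to make these sums well-defined to follow from the fact that $C(\gamma)$ is finite and that the numerical classes $\gamma_i$ are constrained to have the common $\tau_0$-slope while summing to $\gamma$, so only finitely many tuples occur.

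For part (3), I would subtract the two identities of (2) and reorganize. Writing $\delta_{\tau_-}(\gamma)-\delta_{\tau_+}(\gamma)$ as the difference of the two alternating $HN_\pm$ sums, the terms with $\tau_0$-factors must be made to cancel; the mechanism is that $\delta_{\tau_0}(\gamma_l)$ itself expands via (2) back into $\tau_+$-terms, so that repeated substitution converts the raw difference into products in which the $\tau_0$-slope factor is replaced by a commutator $[\delta(\gamma_{l-1}),\delta_{\tau_+}(\gamma_l)]$. \textbf{The hard part will be this last bookkeeping step}: tracking how the two expansions interleave so that all non-commutator contributions cancel and precisely the claimed commutator structure survives, with the correct signs and with the leading unframed factors $\delta(\gamma_1)*\cdots*\delta(\gamma_{l-2})$ left intact. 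This is the purely algebraic manipulation that the text attributes to \cite{CDP1}, so I would either invoke that algebraic lemma directly or reproduce its inductive argument, keeping careful account of the index sets $HN_\pm(\gamma,\tau_0,l)$ under the reindexing that merges the two sums.
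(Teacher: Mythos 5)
Your overall strategy coincides with the paper's: part (1) is read off from the dichotomy of Lemma~\ref{HNf} by stratifying $\M^{ss}_{\tau_0}(\gamma)$ and identifying the two-step Harder--Narasimhan stratum with the $*$-product of stack functions; part (2) is obtained by solving the first identity of (1) for $\delta_{\tau_+}(\gamma)$ and iterating the substitution (the paper does exactly this, via its equation~\eqref{delta+}); and part (3) is the algebraic reorganization obtained by feeding (1) back into the second identity of (2), which the paper carries out by replacing $\delta_{\tau_0}(\gamma_l)$ in that identity.

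The one place where your justification does not hold up is the finiteness of the sums. You argue that only finitely many tuples occur because the $\gamma_i$ sum to $\gamma$ and share its $\tau_0$-slope. But a class in $C(\cA')$ records a separate degree $\deg E_i\in\ZZ$ for each $i\in Q_0'$, while the slope constraint only pins down the \emph{total} degree $\sum_{i\in Q_0'}\deg E_i$ of each $\gamma_j$; when $Q_0'$ has more than one element the index sets $HN_\pm(\gamma,\tau_0,l)$ are therefore infinite, and the finiteness of the set $C(\gamma)$ of critical parameter values is not relevant here. What is true, and what the paper proves, is that only finitely many terms are \emph{nonzero}: for a $\tau_0$-semistable extension $0\to E^1\to E^2\to E^2/E^1\to 0$ with equal $\tau_0$-slopes, the degrees $\deg(E^1)_j$ are bounded above by a constant depending only on $\gamma$ (Lemma 5.8 of \cite{K}), hence bounded on both sides, so all but finitely many of the moduli stacks occurring in the sums are empty. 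With that replacement for your finiteness claim, the rest of your argument — the identification of strata in (1), the M\"obius-inversion sign in (2), and the commutator bookkeeping in (3) — is exactly the paper's proof.
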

\begin{proof} There are only finite nontrivial terms in each summation of (1), (2), and (3).
For example, when $l=2$, let us
consider an exact sequence $$0\ra E^1\ra E^2 \ra E^2/E^1 \ra 0$$ whose factors $E^1, E^2/E^1$ are $\tau _0$-semistable
with $\mu _{\tau_0}(E^1) = \mu _{\tau_0} (E^2/E^1)$. 
It suffices to show that $\deg (E^1)_j$ is bounded above by a number depending only on class $\gamma$.
Since $E^2$ is $\tau _0$-semistable, by a generalization of \cite[Lemma 2.4]{Dia}
$E^2$ is isomorphic to an element in a bounded family of vector bundles on $C$. 
Now using a finite covering $\pi: C\ra \PP ^1$, we see that
$\deg \pi _*(E^1)_j$ is bounded above, hence so is $\deg( E^1)_j$.

The statement (1) follows from Lemma ~\ref{HNf}.

For (2),  we rewrite the first equation of (1) as
\begin{equation}\label{delta+} \delta _{\tau_+}(\gamma) = \delta _{\tau _0}(\gamma ) - \sum _{(\kg _1,\kg _2) \in HN_+(\kg, \tau _0, 2)}
 \delta _{\tau _+}
(\gamma _1) * \delta  (\gamma _2)\end{equation}
  and  apply \eqref{delta+} to  $\delta _{\tau _+}(\gamma _1)$. This iterated procedure must stop since there are only finite nontrivial terms in
  the summation. This proves (2).

To prove (3), we start with the second equation of (2)
and replace $\delta _{\tau _0}(\gamma _l)$ by the first equation of (1).
\end{proof}

\subsection{Log stack functions} Following \cite[Definition 8.1]{J3},
we define the log stack function for $\gamma \in C(\cA ')$ as
\begin{eqnarray*}
\epsilon _{\tau} (\gamma ) := \sum_{\l\ge 1}
\frac{(-1)^{l-1}}{l} \sum _{\sum \kg _i = \kg, \mu _{\tau}(\kg _i) = \mu _{\tau }(\kg) \forall i}
\kd _{\tau} (\gamma _1) *\cdots * \kd _{\tau} (\gamma _l ).
\end{eqnarray*}
As in the proof of Lemma ~\ref{Fi}, there are only finite nontrivial terms in
the sum expression of $\epsilon _{\tau} (\gamma )$.
According to  \cite[Theorem 3.11]{JS},  the log stack function
$\epsilon _{\tau}(\kg )$ is an element in $\SF _{\mathrm{alg}}^{\mathrm{ind}}(\M _{\le 1})$.
In the below, if $v_0(\gamma )=0$, we let $\epsilon (\gamma)$ denote $\epsilon _{\tau }(\gamma)$.

\begin{Lemma}\label{log}
\begin{eqnarray*}
\epsilon _{\tau _-}(\kg ) -\epsilon _{\tau _+}(\kg)= \sum _{l\ge 2} \frac{(-1)^{l-1}}{(l-1)!} \sum _{HN_-(\gamma , \tau _0, l )}
[ \epsilon (\kg _1 ) , [ ... [\epsilon (\kg _{l-1}),\epsilon _{\tau _+} (\kg _l )]...] .
\end{eqnarray*}
\end{Lemma}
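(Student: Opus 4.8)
The plan is to derive the log-stack-function wall-crossing identity purely formally from the relation in Lemma~\ref{Fi}(3), by passing from the $\delta$-functions to their logarithms $\epsilon$. The key input is that the $\epsilon_\tau(\gamma)$ and $\delta_\tau(\gamma)$ are related by a logarithm/exponential correspondence: grouping terms by $\tau$-slope, one has the two mutually inverse expansions
\begin{eqnarray*}
\delta_\tau(\gamma) &=& \sum_{l\ge 1}\frac{1}{l!}\sum_{\sum\gamma_i=\gamma,\ \mu_\tau(\gamma_i)=\mu_\tau(\gamma)}
\epsilon_\tau(\gamma_1)*\cdots*\epsilon_\tau(\gamma_l),\\
\epsilon_\tau(\gamma) &=& \sum_{l\ge 1}\frac{(-1)^{l-1}}{l}\sum_{\sum\gamma_i=\gamma,\ \mu_\tau(\gamma_i)=\mu_\tau(\gamma)}
\delta_\tau(\gamma_1)*\cdots*\delta_\tau(\gamma_l),
\end{eqnarray*}
the second of which is the definition we are given and the first its formal inverse. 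All sums are finite by the boundedness argument of Lemma~\ref{Fi}. First I would substitute these into both sides of the desired identity and reduce everything to a statement about associative formal power series in the completed Ringel--Hall algebra.

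The conceptual core is the Baker--Campbell--Hausdorff philosophy used by Joyce and Song: $\epsilon_\tau(\gamma)$ should be thought of as $\log$ of the generating element $\sum_\gamma \delta_\tau(\gamma)$, graded by slope, so that Lemma~\ref{Fi}(3)---which relates the $\delta_{\tau_\pm}$ through a single commutator $[\delta(\gamma_{l-1}),\delta_{\tau_+}(\gamma_l)]$---translates into an iterated-commutator expansion at the level of logarithms. Concretely, I would introduce the slope-$\mu_{\tau_0}(\gamma)$-graded generating series $\Delta_{\tau_\pm}=\sum\delta_{\tau_\pm}(\gamma)$ and $\Delta=\sum\delta(\gamma)$ over the relevant slope, rewrite Lemma~\ref{Fi}(3) as an identity $\Delta_{\tau_-}=\exp(\mathrm{ad}_{\log\Delta})\,\Delta_{\tau_+}$ (or its appropriate one-sided multiplicative analogue), and then take logarithms. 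Taking $\log$ of both sides and using that $\log$ of the slope-graded series is exactly $\sum_\gamma\epsilon_\tau(\gamma)$ produces the nested brackets $[\epsilon(\gamma_1),[\dots[\epsilon(\gamma_{l-1}),\epsilon_{\tau_+}(\gamma_l)]\dots]]$ with the combinatorial factor $\tfrac{(-1)^{l-1}}{(l-1)!}$, which is precisely the Lie series of the BCH formula. This is the step where the unframed factors $\epsilon(\gamma_i)$ (with $v_0(\gamma_i)=0$) commute among themselves against the single framed factor, matching the $HN_-(\gamma,\tau_0,l)$ index set.

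The main obstacle will be the bookkeeping that converts the \emph{associative} commutator identity of Lemma~\ref{Fi}(3) into the \emph{Lie} identity of Lemma~\ref{log} with the correct rational coefficients, i.e.\ verifying that the factorials $\tfrac{1}{(l-1)!}$ emerge correctly from the $\tfrac{1}{l}$-weighted logarithm and that the nesting of brackets is right-associated as written. I expect that the cleanest route is to cite the abstract commutative-factor BCH lemma already invoked in \cite{CDP1} (the \lq\lq purely algebraic Lemma'' referred to just before Lemma~\ref{Fi}): once the hypotheses of that lemma---finiteness of the sums, the one-commutator form of Lemma~\ref{Fi}(3), and mutual commutativity of the $v_0=0$ generators---are checked in our setting, the formula follows formally. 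I would therefore spend most of the proof verifying these hypotheses (finiteness via Lemma~5.8 of \cite{K} and the boundedness already established, and the vanishing of brackets among unframed classes via the Lie bracket on $L(\cA')$ together with Proposition~\ref{num}), and then appeal to the algebraic lemma of \cite{CDP1} to conclude.
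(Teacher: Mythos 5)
Your proposal is correct and takes essentially the same route as the paper, whose entire proof is to invoke the abstract form of \cite[Lemma 2.6]{CDP1} (cf.\ \cite[Lemma 3.4]{CDP2}) to pass from the $\delta$-identity of Lemma~\ref{Fi}(3) to the $\epsilon$-identity. One small caution: that abstract lemma does not require the unframed factors to commute (and the $\delta(\kg_i)$ with $v_0(\kg_i)=0$ need not commute in $\SF(\M_{\le 1})$); the factor $\tfrac{1}{(l-1)!}$ together with the sum over all orderings in $HN_-(\gamma,\tau_0,l)$ already symmetrizes the nested brackets, so you should drop that item from your list of hypotheses to verify rather than attempt to prove it.
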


\begin{proof} Lemma \ref{Fi} (3) and the definition of log stack functions enable us to apply (the combinatorial argument of) \cite[Lemma 2.4]{CDP1}.
\end{proof}

Theorem ~\ref{LieHom} implies that for fixed  $\kg$,
 the $\tau$-invariant $J_{\tau}(\kg)\in \mathbb{Q}$  can be defined  as
$$ \Psi( \epsilon _{\tau}(\kg ) )=-J_{\tau}(\kg)e_{\kg}  .$$
We call $J_{\tau}(\kg)$ {\em the generalized DT-invariant} for $(\overline{Q}, X, \gamma ,\tau)$ (see \cite[Definition 5.15]{JS}).
This is a direct generalization of ADHM invariants by \cite[Lemma 3.1, Theorem 3.2] {CDP1}.
When the rank $v_0(\gamma)$ of the frame is 0, then the invariant will be denoted simply by $J(\kg)$ since it does not depend on $\tau.$
Now by combining Lemma ~\ref{log} and Theorem ~\ref{LieHom}, we conclude the following wall-crossing formula.
\begin{Thm}\label{wall}
\begin{eqnarray*}
 & (J_{\tau _-}(\kg) - J_{\tau _{+}}(\kg ) )e_{\kg}  \\  & =  \sum _{l\ge 2} \frac{1}{(l-1)!}
\sum _{HN_-(\gamma , \tau _0, l )} [ J(\kg _1)e_{\kg _1}, [ ... [J(\kg _{l-1})e_{\kg _{l-1}},J _{\tau _+} (\kg _l )e_{\kg _l}]...] .
\end{eqnarray*}
\end{Thm}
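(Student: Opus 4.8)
The plan is to deduce the wall-crossing formula by applying the Lie algebra homomorphism $\Psi$ of Theorem~\ref{LieHom} to the identity of log stack functions established in Lemma~\ref{log}. The essential point is that $\Psi$ is a homomorphism of Lie algebras, so it carries the nested commutator bracket $[\,\cdot\,,\,\cdot\,]$ of stack functions in $\SF _{\mathrm{alg}}^{\mathrm{ind}}(\M _{\le 1})$ to the bracket $[\,\cdot\,,\,\cdot\,]$ of $L(\cA ')$. Since all the terms appearing in Lemma~\ref{log} lie in $\SF _{\mathrm{alg}}^{\mathrm{ind}}(\M _{\le 1})$---each $\epsilon _\tau (\gamma )$ is a log stack function, hence virtually indecomposable by \cite[Theorem 3.11]{JS}, and the Lie subalgebra property ensures the iterated brackets remain in this subalgebra---the homomorphism $\Psi$ may be applied term by term.

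First I would record the values of $\Psi$ on the individual log stack functions. By the very definition following Theorem~\ref{LieHom}, we have $\Psi (\epsilon _{\tau _\pm}(\gamma )) = -J_{\tau _\pm}(\gamma )\, e_\gamma$, and for unframed classes $\Psi (\epsilon (\gamma _i)) = -J(\gamma _i)\, e_{\gamma _i}$. Next I would apply $\Psi$ to both sides of the identity in Lemma~\ref{log}. On the left-hand side this yields $-(J_{\tau _-}(\gamma ) - J_{\tau _+}(\gamma ))\, e_\gamma$. On the right-hand side, using that $\Psi$ is a Lie homomorphism, each nested bracket
\[
[\, \epsilon (\gamma _1), [\ldots [\epsilon (\gamma _{l-1}), \epsilon _{\tau _+}(\gamma _l)]\ldots ]\,]
\]
maps to the corresponding nested bracket in $L(\cA ')$ with the factors $\epsilon (\gamma _i)$ replaced by $-J(\gamma _i)\, e_{\gamma _i}$ and $\epsilon _{\tau _+}(\gamma _l)$ replaced by $-J_{\tau _+}(\gamma _l)\, e_{\gamma _l}$. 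An $l$-fold bracket acquires an overall sign $(-1)^l$ from pulling out these $l$ minus signs, which combines with the factor $(-1)^{l-1}$ already present in Lemma~\ref{log} to give $(-1)^{2l-1} = -1$. The common overall sign $-1$ then cancels against the $-1$ on the left-hand side, producing exactly the stated formula with coefficient $\frac{1}{(l-1)!}$ and the brackets $[J(\gamma _1)e_{\gamma _1}, [\ldots [J(\gamma _{l-1})e_{\gamma _{l-1}}, J_{\tau _+}(\gamma _l)e_{\gamma _l}]\ldots ]]$.

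The step I expect to require the most care is the verification that Lemma~\ref{log} may legitimately be pushed forward under $\Psi$: one must check that every summand---not merely the individual $\epsilon _\tau (\gamma )$ but each iterated commutator---actually lies in the domain $\SF _{\mathrm{alg}}^{\mathrm{ind}}(\M _{\le 1})$ on which $\Psi$ is defined and is a Lie homomorphism, and that the sum is finite so the homomorphism property applies term by term. Finiteness follows as in the proof of Lemma~\ref{Fi}, since the degrees of the subobjects are bounded by \cite[Lemma 5.8]{K} and $C(\gamma )$ is a finite set of critical values; membership in the virtually-indecomposable subalgebra is closed under the Lie bracket, so the nested brackets stay in the subalgebra. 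Once these bookkeeping points are settled, the remainder is the routine sign-tracking described above.
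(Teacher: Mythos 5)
Your proposal is correct and follows exactly the paper's route: the paper derives Theorem~\ref{wall} precisely by applying the Lie algebra homomorphism $\Psi$ of Theorem~\ref{LieHom} to the identity of Lemma~\ref{log}, and your sign bookkeeping (the factor $(-1)^l$ from the $l$ minus signs in $\Psi(\epsilon) = -J e$ combining with the $(-1)^{l-1}$ of Lemma~\ref{log} to cancel the $-1$ on the left-hand side) is accurate. The additional care you take in checking that the iterated brackets remain in $\SF _{\mathrm{alg}}^{\mathrm{ind}}(\M _{\le 1})$ and that the sums are finite is a welcome elaboration of points the paper leaves implicit.
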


\subsection{The action by the Jacobian variety}\label{Ja}
Suppose that $v_0(\gamma )=0$ and the genus of $X$ is $g\ge 1$. In this case, by the similar argument for the proof of
\cite[Proposition 6.19]{JS}, the generalized DT invariant $J(\gamma)$ vanishes as follows.
Let $\mathcal{J}(X)$ be the Jacobian variety of $X$ and let $L$ be the universal line bundle on $X\times \mathcal{J}(X)$.
Then the torus group $\mathcal{J}(X)$ acts on $\M ^{ss}_{\tau }(\gamma )$
by $t\cdot E:= E\ot L_t$ for $t \in \mathcal{J}(X)$. Note that this action yields
a torus fibration on $\M ^{ss}_{\tau }(\gamma )$ and the Behrend function on $\M ^{ss}_{\tau }(\gamma )$ is constant on
each $\mathcal{J}(X)$-orbit. Hence we conclude that $J(\gamma )=0$ using the expression of $J(\gamma )$
by the weighted Euler characteristics (see \cite[Section 5.3]{JS}).

\medskip

\noindent{\bf Acknowledgments.} We would like to express our deep gratitude to Kurak Chung for
useful discussions and to Emanuel Diaconescu and Jae-Hyouk Lee for their invaluable comments.
We also thank a referee for careful comments which improve the presentation of the paper.
This work is financially supported by NRF-2007-0093859.



\begin{thebibliography}{99}




\bibitem{AG1} L. Alvarez-Consul and O. Garcia-Prada, {\em Dimensional reduction and quiver bundles,}  J. Reine Angew. Math. 556 (2003), 1-46.

\bibitem{AG2} L. Alvarez-Consul and O. Garcia-Prada, {\em Hitchin-Kobayashi correspondence, quivers, and vortices,}
 Comm. Math. Phys. 238 (2003), no. 1-2, 1-33.



\bibitem{CDP1}      W-E. Chuang, D.E. Diaconescu, and G. Pan,
         {\em Chamber structure and wallcrossing in the ADHM theory of curves II,}
        Journal of Geometry and Physics 62 (2012), 548-561.

\bibitem{CDP2}      W-E. Chuang, D.E. Diaconescu, and G. Pan,
                          {\em  Rank two ADHM invariants and wallcrossing,} Communications in Number Theory and Physics, 4 (2010), 417-461.


\bibitem{CK} I. Ciocan-Fontanine and B. Kim, {\em Moduli stacks of stable toric quasimaps, } Advances in Mathematics
225 (2010), no. 6, 3022--3051.

\bibitem{CKM} I. Ciocan-Fontanine,  B. Kim, and D. Maulik,  {\em Stable quasimap to GIT quotients,} arXiv:1106.3724.




\bibitem{CH} W. Crawley-Boevey and M. Holland, {\em Noncommutative deformations of Kleinian
singularities,} Duke Math. J. 92 (1998), 605--635.


\bibitem{CB1} W. Crawley-Boevey, {\em On the exceptional fibres of Kleinian singularities, } Amer. J. Math. 122 (2000), no. 5, 1027-1037.


\bibitem{CB2} W. Crawley-Boevey, {\em Geometry of the moment map for representations of quivers, } Compositio Math. 126 (2001), no. 3, 257-293.


\bibitem{Dia} D.E. Diaconescu, {\em Chamber structure and wallcrossing in the ADHM theory of curves I,}
              Journal of Geometry and Physics 62 (2012), 523-547.
                                

\bibitem{PG} P. Gabriel, {\em Unzerlegbare Darstellungen I,} Manuscripta Mathematica, Volume 6, Number 1 (1972), 71-103.

\bibitem{GK} P.B. Gothen and A.D. King,
{\em Homological algebra of twisted quiver bundles,} J. London Math. Soc. (2) 71 (2005), no. 1, 85--99.



\bibitem{J2} D. Joyce, {\em Configurations in abelian categories. II. Ringel-Hall algebras,} Advances in Mathematics 210 (2007), no. 2, 635--706.

\bibitem{J3} D. Joyce, {\em Configurations in abelian categories. III. Stability conditions and identities,} Advances in Mathematics 215 (2007), no. 1, 153--219.

\bibitem{JS} D. Joyce and Y. Song, {\em A theory of generalized Donaldson-Thomas invariants,}
                               Mem. Amer. Math. Soc. 217 (2012), no. 1020, iv+199.


\bibitem{K} B. Kim, {\em Stable quasimaps to holomorphic symplectic quotients,} arXiv:1005.4125.

\bibitem{KS}  M. Kontsevich and Y. Soibelman,
{\em Stability structures, motivic Donaldson-Thomas invariants and cluster transformations, } arXiv:0811.2435.


\bibitem{M} K. Miyajima, {\em Kuranishi family of vector bundles and algebraic description of the moduli space of Einstein-Hermitian connections,}
 Publ. Res. Inst. Math. Sci. 25 (1989), no. 2, 301--320.



\end{thebibliography}
\end{document}